\definecolor {processblue}{cmyk}{0.96,0,0,0}
\newtheorem{lemma}{Lemma}[section]
\newtheorem{corollary}[lemma]{Corollary}
\newtheorem{theorem}[lemma]{Theorem}
\newtheorem{proposition}[lemma]{Proposition}
\theoremstyle{definition}
\newtheorem{remark}[lemma]{Remark}
\newtheorem{definition}[lemma]{Definition}
\newcommand{\bsm}{\begin{smallmatrix}}
\newcommand{\esm}{\end{smallmatrix}}
\newcommand{\bbm}{\begin{matrix}}
\newcommand{\ebm}{\end{matrix}}
\begin{document}

\title{Representations of right 3-Nakayama  algebras}

\author{Alireza Nasr-Isfahani}
\address{Department of Mathematics\\
University of Isfahan\\
P.O. Box: 81746-73441, Isfahan, Iran\\ and School of Mathematics, Institute for Research in Fundamental Sciences (IPM), P.O. Box: 19395-5746, Tehran, Iran}
\email{nasr$_{-}$a@sci.ui.ac.ir / nasr@ipm.ir}
\author{Mohsen Shekari}
\address{Department of Mathematics\\
University of Isfahan\\
P.O. Box: 81746-73441, Isfahan, Iran}
\email{mshekari@sci.ui.ac.ir}

\subjclass[2000]{{16G20}, {16G70}, {16D70}, {16D90}}

\keywords{Right 3-Nakayama algebras, Almost split sequences, Indecomposable modules, Special biserial algebras.}

\begin{abstract}
In this paper we study the category of finitely generated modules over a right $3$-Nakayama artin algebra. First we give a characterization of right $3$-Nakayama artin algebras and then we give a complete list of non-isomorphic finitely generated indecomposable modules over any right $3$-Nakayama artin algebra. Also we compute all almost split sequences for the class of right $3$-Nakayama artin algebras. Finally, we classify finite dimensional right $3$-Nakayama algebras in terms of their quivers with relations.
\end{abstract}

\maketitle

\section{introduction}
Let $R$ be a commutative artinian ring. An $R$-algebra $\Lambda$ is called an artin algebra if
$\Lambda$ is finitely generated as a $R$-module. Let $\Lambda$ be an artin algebra. A right $\Lambda$-module $M$ is called uniserial ($1$-factor serial) if it has a unique composition series. An artin algebra $\Lambda$ is called Nakayama algebra if any indecomposable right $\Lambda$-module is uniserial. The class of Nakayama algebras is one the important class of representation finite algebras whose representation theory completely understood \cite{ARS}. According to \cite[Definition 2.1]{NS}, a non-uniserial right $\Lambda$-module $M$ of length $l$ is called $n$-factor serial ($l\geq n>1$), if $\frac{M}{\mathit{rad}^{l-n}(M)}$ is uniserial and $\frac{M}{\mathit{rad}^{l-n+1}(M)}$ is not uniserial. An artin algebra $\Lambda$ is called right $n$-Nakayama if every indecomposable right $\Lambda$-module is $i$-factor serial for some $1\leq i\leq n$ and there exists at least one indecomposable $n$-factor serial right $\Lambda$-module \cite[Definition 2.2]{NS}. The authors in \cite{NS} showed that the class of right $n$-Nakayama algebras provide a nice partition of the class of representation finite artin algebras. More precisely, the authors proved that an artin algebra $\Lambda$ is representation finite if and only if $\Lambda$ is right $n$-Nakayama for some positive integer $n$ \cite[Theorem 2.18]{NS}. The first part of this partition is the class of Nakayama algebras and the second part is the class of right $2$-Nakayama algebras. Indecomposable modules and almost split sequences for the class of right $2$-Nakayama algebras are classified in section $5$ of \cite{NS}. In this paper we will study the class of right $3$-Nakayama algebras.
We first show that an artin algebra $\Lambda$ which is neither Nakayama nor right $2$-Nakayama is right $3$-Nakayama if and only if every indecomposable right $\Lambda$-module of length greater than $4$ is uniserial and every indecomposable right $\Lambda$-module of length $4$ is local. Then we classify all indecomposable modules and almost split sequences over a right $3$-Nakayama artin algebra. We also show that finite dimensional right $3$-Nakayama algebras are special biserial and we describe all finite dimensional right $3$-Nakayama algebras by their quivers and relations. Riedtmann in \cite{R1} and \cite{R2}, by using the covering theory, classified representation-finite self-injective algebras. By elementary proof, avoiding many technical things, we classify self-injective special biserial algebras of finite type which able us to classify self-injective right $3$-Nakayama algebras.

The paper is organized as follows. In Section 2 we first study $3$-factor serial right modules and after classification of right $3$-Nakayama algebras we describe all indecomposable modules and almost split sequences over right $3$-Nakayama artin algebras.

In Section 3 we show that any finite dimensional right $3$-Nakayama algebra is special biserial and then we describe the structure of quivers and their relations of right $3$-Nakayama algebras.

In the final section, we classify self-injective right $3$-Nakayama algebras.

\subsection{notation }
Throughout this paper all modules are finitely generated right $\Lambda$-modules and all fields are algebraically closed fields unless otherwise stated. For a $\Lambda$-module $M$, we denote by $soc(M)$, $top(M)$, $rad(M)$, $\textit{l}(M)$, $\textit{ll}(M)$ and $\mathbf{dim} M$ its socle, top, radical, length, Loewy length and dimension vector, respectively. We also denote by $\tau(M)$, the Auslander-Reiten translation of $M$.
Let $Q=(Q_0, Q_1, s, t)$ be a quiver and $\alpha:i\rightarrow j$ be an arrow in $Q$. One introduces a formal inverse $\alpha^{-1}$ with $s({\alpha}^{-1}) = j$
and $t(\alpha^{-1}) = i$. An edge in $Q$ is an arrow or the inverse of an arrow. To each vertex $i$ in $Q$,
one associates a trivial path, also called trivial walk, $\varepsilon_i$ with $s(\varepsilon_i) = t(\varepsilon_i) = i$. A non-trivial
walk $w$ in $Q$ is a sequence $w=c_1c_2\cdots c_n$, where the $c_i$ are edges such that $t(c_i) = s(c_{i+1})$ for all $i$, whose inverse $w^{-1}$ is defined to be
the sequence $w^{-1}=c_n^{-1}c_{n-1}^{-1}\cdots c_1^{-1}$. A walk $w$ is called reduced if $c_{i+1}\neq c_i^{-1}$ for each $i$. For $i\in Q_0$, we denote by $i^+$ and $i^-$ the set of arrows starting in $i$ and the set of arrows ending in $i$, respectively, and for any set $X$, we denote by $\vert X\vert$ the number of elements in $X$.

\section{right $3$-Nakayama algebras}

In this section we give a characterization of right $3$-Nakayama artin algebras. We also classify all indecomposable modules and almost split sequences over right $3$-Nakayama algebras.

\begin{definition} \cite[Definitions 2.1, 2.2]{NS}
Let $\Lambda$ be an artin algebra and $M$ be a right $\Lambda$-module of length $l$.
\begin{itemize}
\item[$(1)$] $M$ is called  $1$-factor serial (uniserial) if $M$ has a unique composition series.
\item[$(2)$] Let $l\geq n>1$. $M$ is called $n$-factor serial if $\frac{M}{\mathit{rad}^{l-n}(M)}$ is uniserial and $\frac{M}{\mathit{rad}^{l-n+1}(M)}$ is not uniserial.
\item[$(3)$] $\Lambda$ is called right $n$-Nakayama if every indecomposable right $\Lambda$-module is $i$-factor serial for some $1 \leqslant i \leqslant n$ and there exists at least one  indecomposable $n$-factor serial right $\Lambda$-module.
\end{itemize}
\end{definition}

\begin{lemma} \label{l1}
Let $\Lambda$ be an artin algebra and $M$ be an indecomposable  right $\Lambda$-module of length  $r$ and Loewy length $t$. Then the following conditions are equivalent:
\begin{itemize}
\item[$(a)$] $M$ is a $3$-factor serial right $\Lambda$-module.
\item[$(b)$] One of the following conditions hold:
\begin{itemize}
\item[$(i)$] $M$ is local and for  every $1\leq i\leq r-4$, $ rad^{i}(M) $ is local and $ rad^{r-3}(M) $ is not local that either
\begin{itemize}
\item[$(1)$] $r=t+2$, $soc(M)=rad^{r-3}(M)$ and $\textit{l}(soc(M))=3$ or
\item[$(2)$] $r=t+1$ and $rad^{r-2}(M)$ is simple.
\end{itemize}
\item[$(ii)$] $M$ is not local, $r=3$ and $t=2$.
\end{itemize}
\end{itemize}
\end{lemma}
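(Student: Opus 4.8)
The plan is to replace (a) by an equivalent pair of conditions on the two canonical radical quotients of $M$, and then to read the shape of the radical series of $M$ off from these. Throughout I would set $J=\mathrm{rad}(\Lambda)$, so that $\mathrm{rad}^i(M)=MJ^i$ and $\mathrm{rad}(\mathrm{rad}^i(M))=\mathrm{rad}^{i+1}(M)$, and I would use repeatedly that, for a finite length module $N$: (i) $\ell(N)$ is at least the Loewy length of $N$, with equality precisely when $N$ is uniserial; (ii) $\mathrm{rad}^i(N)$ is local if and only if the layer $\mathrm{rad}^i(N)/\mathrm{rad}^{i+1}(N)$ is simple; and (iii) for $k\ge 1$, the quotient $N/\mathrm{rad}^k(N)$ is uniserial if and only if $\mathrm{rad}^i(N)/\mathrm{rad}^{i+1}(N)$ is simple for every $0\le i\le k-1$ with $\mathrm{rad}^i(N)\ne 0$. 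Since a quotient of a uniserial module is uniserial, the definition of $3$-factor serial unwinds to the statement that (a) holds if and only if $r\ge 3$, the module $M/\mathrm{rad}^{r-3}(M)$ is uniserial, and the module $M/\mathrm{rad}^{r-2}(M)$ is not uniserial.

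Working with these three conditions, I would first fix the numerology. Because $M$ is non-uniserial, $r=\ell(M)>t$, so $r\ge t+1$; and if $r\ge t+3$ then $\mathrm{rad}^{r-3}(M)\subseteq\mathrm{rad}^{t}(M)=0$, which would make $M=M/\mathrm{rad}^{r-3}(M)$ uniserial, a contradiction. Hence $t+1\le r\le t+2$. By (ii) and (iii), uniseriality of $M/\mathrm{rad}^{r-3}(M)$ is equivalent to $\mathrm{rad}^i(M)$ being local for all $0\le i\le r-4$ (these radical powers being nonzero since $r-4<t$), and in that case $\ell(M/\mathrm{rad}^{r-3}(M))=r-3$, so $K:=\mathrm{rad}^{r-3}(M)$ has length $3$ and Loewy length $t-r+3\in\{1,2\}$.

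Next I would split into cases. If $r=3$, then $\mathrm{rad}^{r-3}(M)=M$ and $M/\mathrm{rad}^{r-2}(M)=\mathrm{top}(M)$, so $M$ is $3$-factor serial exactly when $\mathrm{top}(M)$ is not simple, that is, when $M$ is not local; for an indecomposable $M$ this forces the Loewy length to be $2$, giving (b)(ii), whereas (b)(i) cannot hold since it would require $\mathrm{rad}^{0}(M)=M$ to be simultaneously local and not local. Assume now $r\ge 4$, so that the condition that $\mathrm{rad}^i(M)$ be local for $0\le i\le r-4$ includes $i=0$, i.e. $M$ is local. If $r=t+2$, then $K$ is semisimple of length $3$ and $\mathrm{rad}^{r-2}(M)=\mathrm{rad}^{t}(M)=0$, so $M/\mathrm{rad}^{r-2}(M)=M$ is non-uniserial automatically; the one substantive point is to show $\mathrm{soc}(M)=K$. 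For this I would take a simple submodule $S\subseteq\mathrm{soc}(M)$ with $S\not\subseteq K$, choose $j$ maximal with $S\subseteq\mathrm{rad}^j(M)$ (so $0\le j\le r-4$ and $S\cap\mathrm{rad}^{j+1}(M)=0$), and use that $\mathrm{rad}^j(M)$ is local --- hence has simple top and is indecomposable --- to obtain $\mathrm{rad}^j(M)=S\oplus\mathrm{rad}^{j+1}(M)$, whence $\mathrm{rad}^{j+1}(M)=0$ and $j\ge t-1=r-3$, contradicting $j\le r-4$. Therefore $\mathrm{soc}(M)=K$ has length $3$ and is not local, which is exactly (b)(i)(1). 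If instead $r=t+1$, then $K$ has length $3$ and Loewy length $2$, so $\mathrm{rad}^{r-2}(M)=\mathrm{rad}(K)\ne 0$ and the layer $\mathrm{rad}^{r-3}(M)/\mathrm{rad}^{r-2}(M)$ has length $3-\ell(\mathrm{rad}^{r-2}(M))$; since all layers of index at most $r-4$ are simple, $M/\mathrm{rad}^{r-2}(M)$ fails to be uniserial if and only if this layer of index $r-3$ is not simple, i.e. if and only if $\mathrm{rad}^{r-2}(M)$ is simple, and then $K$, whose top has length $2$, is not local --- which is exactly (b)(i)(2). For the converse implications one retraces these equivalences, using that a uniserial module has simple socle (so $\ell(\mathrm{soc}(M))=3$ forces $M$ non-uniserial when $r=t+2$) and that a radical layer of length $2$ already destroys uniseriality when $r=t+1$.

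I expect the only genuinely non-formal step to be the identification $\mathrm{soc}(M)=K$ in the case $r=t+2$: this is the single place where indecomposability of $M$ enters essentially, through the indecomposability of the local submodules $\mathrm{rad}^j(M)$, and it will be the main obstacle. The remaining care is purely bookkeeping with the radical series together with the degenerate value $r=3$, where local behaviour of $M$ cannot be deduced from uniseriality of a proper quotient.
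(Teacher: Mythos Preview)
Your argument is correct and follows the same overall case split as the paper (local versus non-local, and in the local case $r=t+2$ versus $r=t+1$), but it is noticeably more self-contained. The paper's proof outsources each of the key structural facts to the companion paper \cite{NS}: the locality of $\mathrm{rad}^i(M)$ for $i\le r-4$ and the non-locality of $\mathrm{rad}^{r-3}(M)$ come from \cite[Theorem~2.6]{NS}, the bound $r\le t+2$ from \cite[Lemma~2.21]{NS}, the inclusion $\mathrm{soc}(M)\subseteq \mathrm{rad}^{r-3}(M)$ from \cite[Remark~2.7]{NS}, and the non-local case from \cite[Corollary~2.8]{NS}. You instead derive all of these directly from the radical-layer characterisation of uniseriality, and in particular you give an explicit argument for $\mathrm{soc}(M)=\mathrm{rad}^{r-3}(M)$ via the indecomposability of the local submodules $\mathrm{rad}^j(M)$. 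What you gain is an elementary proof that stands on its own; what the paper gains is brevity by invoking the general $n$-factor-serial machinery. One small remark: your closing comment slightly overstates the role of the indecomposability of $M$ in the $\mathrm{soc}(M)=K$ step (there it is really only the locality, hence indecomposability, of $\mathrm{rad}^j(M)$ that is used); the indecomposability of $M$ itself is actually needed in the $r=3$ case to exclude $t=1$.
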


\begin{proof}
$(a)\Longrightarrow (b)$. Assume that $M$ is a local right $\Lambda$-module, then by \cite[Theorem 2.6]{NS}, for every $0\leq i \leq r-4$, $rad^{i}(M)$ is local and $ rad^{r-3}(M) $ is not local. On the other hand by \cite[Lemma 2.21]{NS}, $r\leq t+2$ and since $M$ is not uniserial $t<r$. If $r= t+2$, by \cite[Remark 2.7]{NS}, $soc(M)\subseteq rad^{r-3}(M) $ and since  $t=r-2$,   $soc(M)= rad^{r-3}(M) =S_{1}\oplus S_{2}\oplus S_{3}$. \\
If $r= t+1$, then  $rad^{r-2}(M)\neq 0$. If $l(rad^{r-2}(M))=2$, then $ \frac{M}{rad^{r-2}(M)} $ is uniserial which gives a contradiction. Thus $ rad^{r-2}(M) $ is simple, which complete the proof of $(i)$.
If $M$ is not local, then by \cite[Corollary 2.8]{NS} $r=3$, $t=2$ and the result follows. \\
$(b)\Longrightarrow (a)$. If  $M$ is not local and $r=3$,  then by \cite[Corollary 2.8]{NS}, $M$ is a $3$-factor serial right $\Lambda$-module. Now assume that  $M$ satisfies the condition $(i)$. Then $ \frac{M}{rad^{r-3}(M)} $ is uniserial . If $M$ satisfies the condition $(1)$, then $rad^{r-2}(M)=0$ and so $ \frac{M}{rad^{r-2}(M)}\cong M $ is not uniserial. If $M$ satisfies the condition $(2)$, then $\frac{M}{rad^{r-2}(M)}$ is non-uniserial and so $M$ is a $3$-factor serial right $\Lambda$-module.
\end{proof}

 \begin{lemma}\label{l2}
 Let $n> 1$ be a positive integer, $\Lambda$ be a right $n$-Nakayama artin algebra and $M$ be an indecomposable $n$-factor serial right $\Lambda$-module. Then $top(M)$ is simple if and only if $M$ is projective.
 \end{lemma}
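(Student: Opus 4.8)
The plan is to prove the two implications separately. The ``if'' direction is immediate: if $M$ is projective then, being indecomposable, $M\cong e\Lambda$ for a primitive idempotent $e\in\Lambda$, and $top(M)\cong e(\Lambda/\mathit{rad}\,\Lambda)$ is simple because $\Lambda/\mathit{rad}\,\Lambda$ is semisimple and $e$ is primitive (see \cite{ARS}).

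For the converse I would argue by contradiction. Assume $top(M)$ is simple, so that $M$ is local, and let $\pi\colon P\to M$ be a projective cover with kernel $K$; the aim is to show $K=0$. Put $l=\textit{l}(M)$ and $l'=\textit{l}(P)$, so $l'=l+\textit{l}(K)$. I will use two elementary facts. First, $\pi(\mathit{rad}^{\,i}P)=\mathit{rad}^{\,i}M$ for every $i\ge 0$, so that for each $i$ the Loewy layer $\mathit{rad}^{\,i}P/\mathit{rad}^{\,i+1}P$ of $P$ surjects onto $\mathit{rad}^{\,i}M/\mathit{rad}^{\,i+1}M$. Second, a local module is uniserial if and only if all of its Loewy layers are simple; this is proved by induction on the length, using that the radical of a local module is again local with top equal to its top Loewy layer, and that a local module whose radical is uniserial is itself uniserial. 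Note that $P$, $M$ and all of their quotients by powers of the radical are local.

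Next I would read off the radical filtrations. Since $M$ is local and $n$-factor serial, first observe $l>n$ (otherwise $M/\mathit{rad}^{\,l-n+1}M=top(M)$ is simple, hence uniserial, contradicting the definition). Thus $M/\mathit{rad}^{\,l-n}M$ is a proper uniserial quotient of $M$ while $M/\mathit{rad}^{\,l-n+1}M$ is not uniserial; by the second fact the Loewy layers of $M$ in degrees $0,\dots,l-n-1$ are simple, so the layer $\mathit{rad}^{\,l-n}M/\mathit{rad}^{\,l-n+1}M$ is nonzero (otherwise the two quotients above would coincide) and non-simple, hence of length $\ge 2$. By the first fact the degree-$(l-n)$ Loewy layer of $P$ then also has length $\ge 2$, and since it is a Loewy layer of $P/\mathit{rad}^{\,l-n+1}P$, the second fact shows $P/\mathit{rad}^{\,l-n+1}P$ is not uniserial. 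On the other hand $P$ is indecomposable, hence $p$-factor serial for some $1\le p\le n$ because $\Lambda$ is right $n$-Nakayama; and $p\ge 2$, since a uniserial $P$ would have the uniserial quotient $M$, whereas an $n$-factor serial module with $n>1$ is not uniserial. Therefore $P/\mathit{rad}^{\,l'-p}P$ is uniserial, and as every quotient of a uniserial module is uniserial, the non-uniserial module $P/\mathit{rad}^{\,l-n+1}P$ cannot be one of its quotients; this forces $l'-p<l-n+1$, i.e. $\textit{l}(K)=l'-l\le p-n\le 0$. Hence $K=0$ and $M\cong P$ is projective.

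The steps I expect to be purely routine are the ``uniserial iff all Loewy layers simple'' criterion for local modules and the bookkeeping that the degree-$(l-n)$ layer of $M$ is genuinely nonzero and reappears, after pulling back along $\pi$ and truncating, as a bona fide Loewy layer of $P/\mathit{rad}^{\,l-n+1}P$. The only real input is the right $n$-Nakayama hypothesis, used exactly once to say that the projective cover $P$ is itself $p$-factor serial for some $p\le n$; once that is in hand, the inequality $\textit{l}(K)\le p-n\le 0$ closes the argument, and I foresee no further obstacle.
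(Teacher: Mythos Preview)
Your proof is correct. Both you and the paper argue by contradiction via the projective cover $P$ of $M$: if $M$ were not projective, then $P$ would have to be $t$-factor serial for some $t\ge n+1$, contradicting the right $n$-Nakayama hypothesis. The paper obtains this in one line by invoking \cite[Lemma~2.11]{NS}, which says precisely that a proper local quotient of an indecomposable module has strictly smaller ``factor-serial index'' than the module itself. You instead unpack this implication by hand: you locate the first non-simple Loewy layer of $M$ at level $l-n$, push it back along the projective cover to get a non-simple Loewy layer of $P$ at the same level, and then compare with the level $l'-p$ at which $P$ first fails to be uniserial to obtain $l'-l\le p-n\le 0$. This is exactly the content of the cited lemma, reproved inline. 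The upshot is that your argument is self-contained and elementary, at the cost of some bookkeeping; the paper's argument is a two-line appeal to a result already established in \cite{NS}.
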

 \begin{proof}
Assume that $top(M)$ is simple and $M$ is not projective. Let  $P\longrightarrow M$ be  a projective cover of $M$. Since $top(M)$ is simple, $P$ is indecomposable. Then by \cite[Lemma 2.11]{NS}, $P$ is a $t$-factor serial right $\Lambda$-module for some $t\geq n+1$ which gives a contradiction. Then $M$ is projective.
 \end{proof}

 \begin{theorem}\label{T1}
 Let $\Lambda$ be a right $3$-Nakayama artin algebra and $M$ be an indecomposable right $\Lambda$-module. Then $M$ is either a factor of an indecomposable projective right $\Lambda$-module or a submodule of an indecomposable injective right $\Lambda$-module.
 \end{theorem}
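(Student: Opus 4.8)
The plan is to argue by contradiction: suppose $M$ is indecomposable but is neither a factor of an indecomposable projective nor a submodule of an indecomposable injective. First I would observe that since $M$ is a factor of its projective cover $P(M)$, and $M$ is not a factor of any \emph{indecomposable} projective, the top of $M$ cannot be simple; dually, since $M$ embeds into its injective envelope, $\mathrm{soc}(M)$ cannot be simple. So both $\mathrm{top}(M)$ and $\mathrm{soc}(M)$ decompose nontrivially. In particular $M$ is not uniserial, hence $M$ is $2$-factor serial or $3$-factor serial by the definition of right $3$-Nakayama. The $2$-factor serial case should be handled exactly as in \cite{NS} (or quickly reduced to it), so the real content is the $3$-factor serial case.

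Next I would invoke Lemma \ref{l1}. Since $\mathrm{top}(M)$ is not simple, $M$ is not local, so case $(ii)$ of Lemma \ref{l1} applies: $\ell(M)=3$ and $\mathrm{ll}(M)=2$. Thus $M$ has Loewy length $2$, with $\mathrm{rad}(M)=\mathrm{soc}(M)$. From $\ell(M)=3$ and $\mathrm{top}(M)$, $\mathrm{soc}(M)$ both non-simple, the only possibility is $\ell(\mathrm{top}(M))=2$ and $\ell(\mathrm{soc}(M))=1$, or $\ell(\mathrm{top}(M))=1$ and $\ell(\mathrm{soc}(M))=2$ — but the former contradicts $\mathrm{soc}(M)$ non-simple and the latter contradicts $\mathrm{top}(M)$ non-simple. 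This is the crux: the numerical constraints from Lemma \ref{l1}$(ii)$ are incompatible with both socle and top being non-simple, which already forces a contradiction. I would write this out carefully, since the bookkeeping of composition factors in a length-$3$, Loewy-length-$2$ module is where a slip could hide.

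For completeness I would also dispose of the $2$-factor serial case: by the analogous structure result in \cite{NS}, a non-projective, non-injective indecomposable $2$-factor serial module again has simple top or simple socle, contradicting our assumption; alternatively one can cite the explicit classification in Section~5 of \cite{NS}. Finally, I should double-check the uniserial case is genuinely excluded at the outset: a uniserial module has simple top, hence is a factor of the indecomposable projective covering that top, so it never arises as a counterexample. Assembling these, every indecomposable $M$ either has simple top — whence it is a quotient of an indecomposable projective — or simple socle — whence, dually, it is a submodule of an indecomposable injective; and the argument above shows no indecomposable over a right $3$-Nakayama algebra can avoid both. The main obstacle is purely the careful case analysis via Lemma \ref{l1}; no deep machinery beyond the cited results from \cite{NS} is needed.
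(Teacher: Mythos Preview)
Your approach is essentially the paper's, just phrased as a contradiction rather than a direct case split. The paper likewise dispatches uniserial and $2$-factor serial modules by noting they are local (citing \cite[Lemma 5.2]{NS}), and for the $3$-factor serial non-local case invokes Lemma~\ref{l1}$(ii)$ to get $\ell(M)=3$, $\mathrm{ll}(M)=2$, and then concludes that $\mathrm{soc}(M)$ is simple, so $M$ embeds in an indecomposable injective. One minor difference: the paper handles the \emph{local} $3$-factor serial case separately via Lemma~\ref{l2} (such $M$ is projective), whereas in your contradiction framing that case never arises, since a local module already has simple top and hence is a quotient of its indecomposable projective cover.

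There is one small but real gap in your outline. From $\mathrm{ll}(M)=2$ you assert $\mathrm{rad}(M)=\mathrm{soc}(M)$, and your subsequent dichotomy ``$\ell(\mathrm{top}(M))=2,\ \ell(\mathrm{soc}(M))=1$ or vice versa'' depends on this. But Loewy length $2$ only gives $\mathrm{rad}(M)\subseteq\mathrm{soc}(M)$; the reverse inclusion needs the (standard, easy) fact that every simple submodule of an indecomposable non-simple module lies in the radical, since otherwise it would split off as a direct summand. Once you insert this one sentence, your length count $\ell(\mathrm{top}(M))+\ell(\mathrm{soc}(M))=\ell(M)=3$ becomes valid and the contradiction follows exactly as you wrote. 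The paper's own proof glosses over the same step when it reads off ``$\mathrm{soc}(M)$ is simple'' from Lemma~\ref{l1}$(ii)$, so you are in good company; just make the argument explicit when you write it up.
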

 \begin{proof}
If $M$ is uniserial or $2$-factor serial, then by definition of uniserial modules and  \cite[Lemma 5.2]{NS}, $M$ is a factor of an indecomposable projective module. Now assume that $M$ is  $3$-factor serial. If  $top(M)$ is simple, then by Lemma \ref{l2}, $M$ is projective. If $top(M)$ is not simple, then by Lemma \ref{l1}, $l(M)=3$ and $soc(M)$ is simple. This implies that $M$ is a submodule of an indecomposable injective right $\Lambda$-module.
 \end{proof}

An artin algebra $\Lambda$ is said to be of local-colocal type if every indecomposable right $\Lambda$-module is local or colocal (i.e. has a simple socle) \cite{T2}.

 \begin{corollary}\label{c1}
 Let $\Lambda$ be a right $3$-Nakayama artin algebra. Then $\Lambda$ is of local-colocal type.
 \end{corollary}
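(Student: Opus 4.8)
The plan is to deduce Corollary \ref{c1} directly from Theorem \ref{T1}, so the work is essentially bookkeeping rather than new structural analysis. First I would recall what Theorem \ref{T1} gives us: for a right $3$-Nakayama artin algebra $\Lambda$, every indecomposable right $\Lambda$-module $M$ is either a factor of an indecomposable projective module or a submodule of an indecomposable injective module. So it suffices to show that a factor of an indecomposable projective is local, and that a submodule of an indecomposable injective is colocal; then every indecomposable module falls into one of the two classes and the conclusion follows.

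The first of these two facts is immediate: if $P$ is an indecomposable projective right $\Lambda$-module, then $P$ is local, i.e. $P/\mathit{rad}(P)$ is simple, since $P$ has a simple top. Any quotient $M = P/N$ then satisfies $M/\mathit{rad}(M) \cong P/(\mathit{rad}(P)+N)$, which is a quotient of the simple module $P/\mathit{rad}(P)$, hence is either zero (impossible for $M\neq 0$) or simple; so $M$ is local. The second fact is the dual statement: if $I$ is an indecomposable injective right $\Lambda$-module, then $I$ has a simple socle, and any submodule $M$ of $I$ has $soc(M) = M \cap soc(I) \subseteq soc(I)$, which is a nonzero submodule of a simple module, hence simple; so $M$ is colocal. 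Combining, every indecomposable right $\Lambda$-module is local or colocal, which is precisely the assertion that $\Lambda$ is of local-colocal type.

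I do not anticipate a genuine obstacle here; the only point requiring a little care is making sure the two elementary lemmas about quotients of local modules and submodules of colocal modules are stated cleanly (these are standard, and can be invoked without reproving them, or dispatched in a single line each as above). The corollary is really just Theorem \ref{T1} combined with the observation that ``factor of indecomposable projective'' $\Rightarrow$ ``local'' and ``submodule of indecomposable injective'' $\Rightarrow$ ``colocal.''
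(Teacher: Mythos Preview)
Your proposal is correct and matches the paper's intent: the paper states this as an immediate corollary of Theorem \ref{T1} with no separate proof, and your two one-line observations (quotients of local modules are local, submodules of colocal modules are colocal) are exactly the implicit justification.
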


An artin algebra $\Lambda$ is said to be of right $n$-th local type if for every indecomposable right $\Lambda$-module $M$,
  $top^{n}(M)=\frac{M}{rad^{n}(M)}$ is indecomposable \cite{A1}.

  \begin{proposition}\label{p2}
  Let $\Lambda$ be a right $3$-Nakayama artin algebra and $M$ be an indecomposable right $\Lambda$-module. Then the following statements hold.
  \begin{itemize}
 \item[$(a)$] If $M$ is $2$-factor serial, then $l(M)=3$ and $ll(M)=2$.
  \item[$(b)$] If $M$ is $3$-factor serial and non-local, then $l(M)=3$ and $ll(M)=2$.
  \item[$(c)$]  If $M$ is $3$-factor serial and local, then $l(M)=4$ and $ll(M)=3$.
  \end{itemize}
  \end{proposition}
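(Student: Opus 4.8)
The plan is to treat the three parts in the order $(b)$, $(a)$, $(c)$: part $(b)$ is immediate, and part $(c)$ will be reduced to part $(a)$. For $(b)$, apply Lemma~\ref{l1} to the $3$-factor serial module $M$. The equivalence $(a)\Leftrightarrow(b)$ there shows that $M$ satisfies condition $(b)$; since $M$ is not local, the alternative $(b)(i)$, which presupposes $M$ local, is excluded, so $(b)(ii)$ must hold, giving $l(M)=3$ and $ll(M)=2$.

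For $(a)$ I would argue in three steps. First, $M$ is local: otherwise $top(M)$ is not simple, but, writing $l=l(M)$, the module $M/rad^{l-2}(M)$ is uniserial by the definition of $2$-factor serial, hence has simple top, and it is a proper nonzero factor of $M$ — proper because $rad^{l-2}(M)\neq 0$ (an indecomposable module of length $\le 2$ is uniserial, so $l\ge 3$, and $rad^{l-2}(M)=0$ would make $M$ uniserial) — so $top(M/rad^{l-2}(M))=top(M)$ would be simple, a contradiction. Second, I read off the Loewy structure: by \cite[Theorem~2.6]{NS} (in the $2$-factor serial case) $rad^i(M)$ is local for $0\le i\le l-3$ and $rad^{l-2}(M)$ is not local, so uniseriality of $M/rad^{l-2}(M)$ forces the top $l-2$ Loewy layers of $M$ to be simple; hence $l(rad^{l-2}(M))=2$, and being non-local it is semisimple, so $rad^{l-1}(M)=0$, $ll(M)=l-1$, and the Loewy layers of $M$ from the top are $1,1,\dots,1,2$. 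Third, I show $l=3$. Assume $l\ge 4$; replacing $M$ by $rad^{l-4}(M)$, which is again local and $2$-factor serial of length $4$ with Loewy layers $1,1,2$, we may assume $l=4$. Write the Loewy layers of $M$ as a simple $S_0$ on top, a simple $S_1$, and a semisimple module $S_2\oplus S_3$ of length $2$, so $rad^3(M)=0$ and $rad(M)$ is the local module with top $S_1$ and radical $S_2\oplus S_3$. Now build an indecomposable $\Lambda$-module $V$ that is neither local nor colocal, contradicting Corollary~\ref{c1}: let $V$ be a non-split extension $0\to rad(M)\to V\to S_0\oplus S_1\to 0$ whose class in $\mathrm{Ext}^{1}(S_0,rad(M))\oplus \mathrm{Ext}^{1}(S_1,rad(M))$ has both components nonzero. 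Such a class exists because the first summand contains the class of $M$ (the non-split extension $0\to rad(M)\to M\to S_0\to 0$), and the second is nonzero by a long exact sequence argument from $0\to S_2\oplus S_3\to rad(M)\to S_1\to 0$. One then checks $l(V)=5$, $top(V)=S_0\oplus S_1$, $soc(V)=S_2\oplus S_3$, and that $V$ is indecomposable; so $V$ is neither local nor colocal, contradicting Corollary~\ref{c1}. Hence $l(M)=3$ and $ll(M)=l(M)-1=2$.

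For $(c)$, Lemma~\ref{l1}$(b)(i)$ says a local $3$-factor serial $M$ satisfies case~$(1)$ ($l(M)=ll(M)+2$, $soc(M)=rad^{l-3}(M)$, $l(soc(M))=3$) or case~$(2)$ ($l(M)=ll(M)+1$, $rad^{l-2}(M)$ simple). Reading off Loewy layers as in $(a)$, in case~$(1)$ they are $1,\dots,1,3$ with $ll(M)=l(M)-2$, and in case~$(2)$ they are $1,\dots,1,2,1$ with $ll(M)=l(M)-1$. In case~$(1)$, after replacing $M$ by $rad^{l-4}(M)$ we may assume $l(M)=4$, so $M$ has a simple top $S_0$ and $rad(M)=soc(M)=S_1\oplus S_2\oplus S_3$ with $rad^2(M)=0$; since $\mathrm{Ext}^{1}(S_0,S_i)\neq 0$ for $i=1,2,3$ (because $M$ is an indecomposable extension of $S_0$ by $S_1\oplus S_2\oplus S_3$), the classical ``three subspaces''/$D_4$ construction produces an indecomposable module with top $S_0\oplus S_0$ and socle $S_1\oplus S_2\oplus S_3$, which is neither local nor colocal, again contradicting Corollary~\ref{c1}. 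So case~$(1)$ cannot occur. In case~$(2)$ one has $l(M)\ge 4$ (for $l(M)=3$ the layer pattern would force $top(M)$ to have length $2$, against locality), and the factor $N:=M/rad^{l-2}(M)$ is indecomposable with simple top, non-uniserial, with Loewy layers $1,\dots,1,2$; a direct check shows $N$ is $2$-factor serial of length $l(M)-1$, so part $(a)$ yields $l(M)-1=3$, that is $l(M)=4$ and $ll(M)=3$.

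The hard part is the construction, in parts $(a)$ and $(c)$, of the indecomposable module $V$ that is neither local nor colocal: one must choose the extension class with the stated non-vanishing, verify that $V$ really is a $\Lambda$-module (as $ll(V)\le 3$, the only relations that could obstruct this have length $2$), and compute that $V$ is indecomposable with the claimed top and socle. For finite-dimensional $\Lambda$ this is the familiar $D_4$-type argument; in the general artin setting it is carried out with the $\mathrm{Ext}^{1}$-bimodules between the relevant simple modules, equivalently via the separated quiver of $\Lambda/rad^{2}\Lambda$. Everything else — the reductions to length $4$, the layer computations, and the reduction of $(c)$ to $(a)$ — is routine once Lemma~\ref{l1} and \cite[Theorem~2.6]{NS} are in hand.
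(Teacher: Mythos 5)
Your route is genuinely different from the paper's. The paper first notes that $\Lambda$ is of right $2$-nd local type, then quotes Asashiba's Lemma~1.4 together with Theorem~2.6 and Corollary~2.8 of \cite{NS} to conclude in a few lines that $rad(M)$ is not local and to pin down $l(M)$ and $ll(M)$; it never constructs any auxiliary module. You instead compute the Loewy layers directly, reduce to length $4$ by passing to $rad^{l-4}(M)$, and try to derive a contradiction with Corollary~\ref{c1} by manufacturing an indecomposable module that is neither local nor colocal. Your reductions are all correct and non-circular (Corollary~\ref{c1} depends only on Lemmas~\ref{l1} and~\ref{l2}, not on Proposition~\ref{p2}), and the reduction of case~$(2)$ of part~$(c)$ to part~$(a)$ via $N=M/rad^{l-2}(M)$ is clean and arguably more transparent than the paper's citation of \cite{A1}.

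The genuine gap is exactly the step you defer as ``the hard part,'' and it is load-bearing: the entire contradiction rests on producing an \emph{indecomposable} module that is neither local nor colocal, since a decomposable one contradicts nothing. In part~$(a)$ you take $V$ to be an extension of $S_0\oplus S_1$ by $rad(M)$ whose class $(\xi_0,\xi_1)$ merely has both components nonzero, and assert that one ``then checks'' $V$ is indecomposable with $top(V)=S_0\oplus S_1$. That check can fail for the class you specify: if $S_0\cong S_1$ and $\xi_1\in\xi_0\cdot \mathrm{End}(S_1)$, a change of basis in $S_0\oplus S_1$ turns the class into $(\xi_0,0)$ and $V\cong M\oplus S_1$ is decomposable. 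The construction can be repaired — take $\xi_1=\iota_*(\eta)$ with $0\neq\eta\in \mathrm{Ext}^1(S_1,S_2\oplus S_3)$, and note that such a class cannot lie in $\xi_0\cdot\mathrm{End}(S_1)$ because its middle term has non-simple top while $M$ has simple top — but even after a correct choice, indecomposability of $V$ is not formal: over an artin algebra it amounts to a computation with the species of type $D_4$ built from the division rings $\mathrm{End}(S_i)$ and the $\mathrm{Ext}^1$-bimodules (equivalently, with the separated quiver of $\Lambda/rad^{3}\Lambda$). The same objection applies to the ``three subspaces'' module with top $S_0\oplus S_0$ and socle $S_1\oplus S_2\oplus S_3$ invoked in case~$(1)$ of part~$(c)$, where moreover the simples $S_1,S_2,S_3$ need not be pairwise non-isomorphic, so the relevant valued quiver is not literally $D_4$. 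Until these indecomposability arguments are actually written out, parts~$(a)$ and~$(c)$ are not proved; the paper's appeal to \cite[Lemma 1.4]{A1} is precisely what lets it avoid this work.
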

  \begin{proof}
 Any  indecomposable right $\Lambda$-module is either uniserial or $2$-factor serial or $3$-factor serial, then  by definition of uniserial module,  \cite[Lemma 5.2]{NS} and Lemma \ref{l1}, $top^{2}(M)=\frac{M}{rad^{2}(M)}$ is indecomposable and so $\Lambda$ is of right $2$-nd local type. Let $M$ be a $2$-factor serial right $\Lambda$-module, so by \cite[Lemma 1.4]{A1}, $rad(M)$ is not local and by  \cite[Corollary 2.8]{NS}, $l(rad(M))=2$. This proves part $(a)$. \\
The part $(b)$ follows from Lemma \ref{l1}. \\
Let $M$ be a local $3$-factor serial right $\Lambda$-module. By \cite[Lemma 1.4]{A1}, $rad(M)$ is not local and by \cite[Theorem 2.6]{NS}, $l(M)=4$. By Lemma \ref{l1}, $ll(M)=3$ and the result follows.
  \end{proof}

In the next theorem, we give a characterization of right $3$-Nakayama artin algebras.

  \begin{theorem}\label{p3}
  Let $\Lambda$ be an artin algebra which is  neither Nakayama nor right $2$-Nakayama. Then $\Lambda$ is right $3$-Nakayama if and only if every indecomposable right $\Lambda$-module $M$ with   $l(M)>4$ is uniserial and every indecomposable right $\Lambda$-module $M$ with  $l(M)=4$ is local.
  \end{theorem}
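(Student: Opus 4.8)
The plan is to prove both implications by reducing everything to the structure of $n$-factor serial modules already established in \cite{NS} and in Lemma~\ref{l1} and Proposition~\ref{p2}. For the forward direction, assume $\Lambda$ is right $3$-Nakayama. Then every indecomposable right $\Lambda$-module is $i$-factor serial for some $i\in\{1,2,3\}$. If $M$ is uniserial there is nothing to check; if $M$ is $2$-factor serial then $l(M)=3$ by Proposition~\ref{p2}(a); if $M$ is $3$-factor serial and non-local then $l(M)=3$ by Proposition~\ref{p2}(b); and if $M$ is $3$-factor serial and local then $l(M)=4$ by Proposition~\ref{p2}(c). Hence any indecomposable $M$ with $l(M)>4$ must be uniserial, and any indecomposable $M$ with $l(M)=4$ must be $3$-factor serial and local, i.e. local. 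This disposes of the forward implication.

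For the converse, suppose $\Lambda$ is neither Nakayama nor right $2$-Nakayama, and that every indecomposable $M$ with $l(M)>4$ is uniserial and every indecomposable $M$ with $l(M)=4$ is local. I must show $\Lambda$ is right $3$-Nakayama, i.e. (1) every indecomposable module is $i$-factor serial for some $i\le 3$, and (2) there is at least one indecomposable $3$-factor serial module. Claim (2) is the easier half: since $\Lambda$ is not Nakayama there is a non-uniserial indecomposable module, and since $\Lambda$ is not right $2$-Nakayama it is not the case that every indecomposable is $1$- or $2$-factor serial, so there exists an indecomposable $M$ that is $n$-factor serial for some $n\ge 3$; I then argue that such an $M$ can be taken to be exactly $3$-factor serial, or more efficiently I defer this to the argument for (1), which will show no indecomposable can be $n$-factor serial for $n\ge 4$, while the failure of the right $2$-Nakayama property then forces a $3$-factor serial module to exist.

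The core of the converse is (1): I must rule out indecomposable $n$-factor serial modules for $n\ge 4$. Suppose $M$ is indecomposable and $n$-factor serial with $n\ge 4$, and let $l(M)=r$, $ll(M)=t$. By \cite[Lemma 2.21]{NS} (or its analogue in the cited results on $n$-factor serial modules) one has $r\ge n\ge 4$. If $r>4$ the hypothesis forces $M$ uniserial, contradicting $n>1$. So $r=4$, whence $n=4$ and $M$ is $4$-factor serial, meaning $M/rad^{0}(M)=0$ being uniserial is vacuous and $M/rad(M)$ is not uniserial, i.e. $top(M)$ is not simple. But then $M$ is an indecomposable module of length $4$ that is not local, contradicting the hypothesis. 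Hence no indecomposable module is $n$-factor serial for $n\ge 4$, so every indecomposable is $i$-factor serial for some $i\le 3$; combined with (2) this gives that $\Lambda$ is right $3$-Nakayama.

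The main obstacle I anticipate is handling the boundary case $r=4$, $n=4$ cleanly: I need the precise meaning of "$4$-factor serial of length $4$" to unwind to "$top(M)$ not simple," which is exactly the statement that $M/rad^{l-n+1}(M)=M/rad(M)$ is not uniserial; and I need to be sure the hypothesis "$l(M)=4\Rightarrow M$ local" is genuinely violated here, i.e. that a $4$-factor serial module of length $4$ is never local — this follows since a length-$4$ local module has simple top, forcing $M/rad(M)$ uniserial, so it could be at most $3$-factor serial. A secondary point requiring care is making the existence claim (2) airtight: I should phrase it as "the negation of right $2$-Nakayama plus the absence of $n$-factor serial modules for $n\ge 4$ forces an indecomposable $3$-factor serial module," rather than trying to manufacture one directly.
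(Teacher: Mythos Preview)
Your proposal is correct and follows essentially the same route as the paper: both use Proposition~\ref{p2} for the forward direction, and for the converse both reduce to showing that no indecomposable can be $n$-factor serial for $n\ge 4$ by noting such a module must have length $\ge 4$, hence length exactly $4$ by the uniseriality hypothesis, and then be non-local (the paper invokes \cite[Corollary~2.8]{NS} here, while you unpack the definition of $4$-factor serial directly). One small correction: the inequality $r\ge n$ is built into the definition of $n$-factor serial (Definition~2.1(2) requires $l\ge n$), not \cite[Lemma~2.21]{NS}, which gives an upper bound on $r$ in terms of the Loewy length.
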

  \begin{proof}
  Assume that $\Lambda$ is a right $3$-Nakayama algebra. It follows from Proposition \ref{p2} that, every indecomposable right $\Lambda$-module $M$ with $l(M)>4$ is uniserial.  Assume that there exists an indecomposable right $\Lambda$-module $M$ with  $l(M)=4$ which is not local. Then  by  \cite[Corollary 2.8]{NS}, $M$ is $4$-factor serial which is a contradiction. Conversely, assume that any indecomposable right $\Lambda$-module $M$ with $l(M)>4$ is uniserial and every indecomposable right $\Lambda$-module $M$ with  $l(M)=4$ is local. Since  $\Lambda$ is neither Nakayama nor right $2$-Nakayama, there exists an indecomposable $t$-factor serial right $\Lambda$-module $M$ for some $t\geq 3$. Also for any indecomposable right $\Lambda$-module $N$ of length $4$, $N$ is local and by \cite[Corollary 2.8]{NS}, $N$ is $r$-factor serial right $\Lambda$-module for some $r\leq 3$. Therefor $M$ is $3$-factor serial and $\Lambda$ is right $3$-Nakayama.
 \end{proof}

  \begin{corollary}\label{c3}
  Let $\Lambda$ be an artin algebra. Then the following statements hold.
  \item[$(a)$] If every indecomposable right $\Lambda$-module of length greater than $4$ is  uniserial and every indecomposable right $\Lambda$-module of length $4$ is local, then $\Lambda$ is either Nakayama or right $2$-Nakayama or right $3$-Nakayama.
  \item[$(b)$]  If every indecomposable right $\Lambda$-module of length greater than $4$ is  uniserial, every indecomposable right $\Lambda$-module of length $4$ is local and there exists an indecomposable non-uniserial right $\Lambda$-module which is not projective, then $\Lambda$ is right $3$-Nakayama.
  \end{corollary}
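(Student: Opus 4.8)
Both parts will follow quickly from Theorem \ref{p3}. For part $(a)$, observe that the two length conditions in the hypothesis are verbatim the module-theoretic conditions occurring in Theorem \ref{p3}. So the plan is to split into cases: if $\Lambda$ is Nakayama or right $2$-Nakayama there is nothing to prove, and otherwise $\Lambda$ is an artin algebra that is neither Nakayama nor right $2$-Nakayama and satisfies those length conditions, whence Theorem \ref{p3} gives at once that $\Lambda$ is right $3$-Nakayama. It is worth noting that representation-finiteness is not needed as an extra input here: beyond the existence of a $3$-factor serial indecomposable (which follows from ``neither Nakayama nor right $2$-Nakayama''), the only thing to verify is that every indecomposable is at most $3$-factor serial, and this is checked length by length exactly as in the proof of Theorem \ref{p3} (indecomposables of length $\leq 3$ are automatically at most $3$-factor serial, those of length $4$ are local, and those of greater length are uniserial).

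For part $(b)$, I would invoke part $(a)$ to reduce to excluding the possibilities that $\Lambda$ is Nakayama or right $2$-Nakayama, using the extra hypothesis that there is an indecomposable non-uniserial module $N$ which is not projective. If $\Lambda$ were Nakayama, every indecomposable would be uniserial, contradicting the existence of $N$. The real content is therefore to show that over a right $2$-Nakayama algebra every non-uniserial indecomposable module is projective. Over such an algebra every indecomposable is $1$- or $2$-factor serial, so a non-uniserial indecomposable $M$ is $2$-factor serial; it cannot have length $2$, since a non-uniserial module of length $2$ is semisimple and hence decomposable, so $l(M)\geq 3$; then $M/rad^{l(M)-2}(M)$ is uniserial and, because $rad^{l(M)-2}(M)\subseteq rad(M)$, has the same top as $M$, so $top(M)$ is simple; hence Lemma \ref{l2}, applied with $n=2$, forces $M$ to be projective. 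This contradicts the existence of $N$, so $\Lambda$ is not right $2$-Nakayama, and part $(a)$ now yields that $\Lambda$ is right $3$-Nakayama.

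The whole argument is bookkeeping on top of Theorem \ref{p3} and Lemma \ref{l2}; the one point I would take care over --- and essentially the only place anything can go wrong --- is the claim that an indecomposable $2$-factor serial module over a right $2$-Nakayama algebra has simple top, which is why the degenerate length-$2$ case is disposed of explicitly above before Lemma \ref{l2} is applied.
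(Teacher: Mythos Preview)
Your proof is correct and follows essentially the same route as the paper: part $(a)$ is the trichotomy coming directly from Theorem~\ref{p3}, and part $(b)$ amounts to excluding the Nakayama and right $2$-Nakayama cases before invoking Theorem~\ref{p3}. The only difference is that where the paper disposes of the right $2$-Nakayama case by citing \cite[Proposition 5.5]{NS} (which asserts outright that every non-uniserial indecomposable over a right $2$-Nakayama algebra is projective), you instead re-derive this fact internally from Lemma~\ref{l2}; your verification that a $2$-factor serial indecomposable has simple top is correct, and indeed the length-$2$ case is vacuous since the definition already forces $l(M)\geq 3$ for a $2$-factor serial module.
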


\begin{proof}
$(b)$ It is clear that $\Lambda$ is not Nakayama. If $\Lambda$ is a right $2$-Nakayama, then by  \cite[Proposition 5.5]{NS} every indecomposable non-uniserial right $\Lambda$ is projective, which gives a contradiction. Thus by Theorem \ref{p3}, $\Lambda$ is right $3$-Nakayama.
 \end{proof}

  \begin{lemma}\label{l3}
Let $\Lambda$ be a right $3$-Nakayama artin algebra and $M$ be an indecomposable $3$-factor serial right $\Lambda$-module. Then $l(soc(M))\leq 2$.
\end{lemma}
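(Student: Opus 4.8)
The plan is to reduce everything to the numerical data already recorded in Proposition \ref{p2}: an indecomposable $3$-factor serial right $\Lambda$-module $M$ is either non-local with $l(M)=3$ and $ll(M)=2$, or local with $l(M)=4$ and $ll(M)=3$. I would handle these two cases separately, and in each case derive the bound on $l(soc(M))$ from the fact that $rad(M)\neq 0$ (so $M$ is not semisimple) together with the shape of $M$.

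\emph{Non-local case.} Here $l(M)=3$ and $ll(M)=2$, so $rad(M)\neq 0$; hence $M$ is not semisimple, $soc(M)$ is a proper submodule, and $l(soc(M))\leq l(M)-1=2$.

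\emph{Local case.} Here $l(M)=4$, $ll(M)=3$, and $top(M)=M/rad(M)$ is simple. Since $M$ is local, Nakayama's lemma shows that every proper submodule of $M$ lies inside $rad(M)$ (if a submodule $N$ satisfies $N+rad(M)=M$ then $N=M$, using that $M/rad(M)$ is simple). Because $l(rad(M))=3\neq 0$, the socle $soc(M)$ is proper, so $soc(M)\subseteq rad(M)$ and therefore $l(soc(M))\leq 3$. If $l(soc(M))=3$ then $soc(M)=rad(M)$ would be semisimple, forcing $rad^{2}(M)=rad(rad(M))=0$ and hence $ll(M)\leq 2$, contradicting $ll(M)=3$. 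Thus $l(soc(M))\leq 2$.

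The only delicate point is the Nakayama-lemma step in the local case that places $soc(M)$ inside $rad(M)$; once that is in hand, the rest is just bookkeeping with lengths and Loewy lengths. (One could instead quote Lemma \ref{l1} directly to read off the Loewy structure, but Proposition \ref{p2} already supplies exactly the invariants needed.)
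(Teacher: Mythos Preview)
Your argument is correct and substantially simpler than the paper's own proof. Both proofs begin by invoking Proposition~\ref{p2} to split into the non-local case ($l(M)=3$, $ll(M)=2$) and the local case ($l(M)=4$, $ll(M)=3$), and the non-local case is handled the same way. The difference is in the local case: the paper first shows via Lemma~\ref{l2} that $M$ must be projective, disposes of the injective subcase (where $soc(M)$ is simple), and in the non-injective subcase assumes $l(soc(M))=3$ and derives a contradiction through a chain of four almost split sequences, ultimately producing an indecomposable $\tau^{-1}(M)$ of length $5$ that is forced to be uniserial while admitting a non-uniserial submodule. You bypass all of this Auslander--Reiten machinery by observing directly that $ll(M)=3$ forces $rad(M)$ to be non-semisimple, so $soc(M)$ (which sits inside $rad(M)$ since $M$ is local and non-simple) must be a proper submodule of $rad(M)$, giving $l(soc(M))\leq 2$ immediately. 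Your route is more elementary and more robust; the paper's route, while longer, has the side benefit of exhibiting explicitly how the almost split sequences around such an $M$ would look, which feeds into the later classification in Theorem~\ref{T3}.
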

\begin{proof}
By Proposition \ref{p2}, $l(M)$ is either $3$ or $4$. If $l(M)=3$, then by Lemma \ref{l1}, $soc(M)$ is simple. Now assume
that $l(M)=4$. Then by Proposition \ref{p2} and Lemma \ref{l2}, $M$ is projective. If $M$ is injective, then $soc(M)$ is simple. Assume that $M$ is not injective. Assume on a contrary that  $soc(M)=S_1\oplus S_2\oplus S_3$, for simple $\Lambda$-modules $S_1, S_2$ and $S_3$. Then $rad(M)=Soc(M)$ and we have a right minimal almost split morphism $S_1\oplus S_2\oplus S_3\longrightarrow M$. Thus $S_i$ is not injective for each $1\leq i\leq 3$ and we have the following almost split sequences
\begin{center}
$0\longrightarrow S_1\longrightarrow M\longrightarrow \tau^{-1}(S_1)\longrightarrow0 $
\end{center}
\begin{center}
$0\longrightarrow S_2\longrightarrow M\longrightarrow \tau^{-1}(S_2)\longrightarrow0 $
\end{center}
\begin{center}
$0\longrightarrow S_3\longrightarrow M\longrightarrow \tau^{-1}(S_3)\longrightarrow0 $
\end{center}
\begin{center}
$0\longrightarrow M\longrightarrow {\tau^{-1}(S_1)\oplus \tau^{-1}(S_2)\oplus \tau^{-1}(S_3)}\buildrel{ [f_{1}, f_{2}, f_{3}]}\over\longrightarrow \tau^{-1}(M)\longrightarrow 0$
\end{center}

Then $\tau^{-1}(M)$ is an indecomposable of length $5$ and by Theorem \ref{p3}, $\tau^{-1}(M)$ is uniserial. On the other hand for any $1\leq i \leq3$, the irreducible morphism $ f_i:\tau^{-1}(S_i)\longrightarrow \tau^{-1}(M)$ is a monomorphism and so $\tau^{-1}(S_i)$ is uniserial. Also by \cite[Theorem 2.13]{NS}, there exists $1\leq i \leq 3$ such that $\tau^{-1}(S_i)\cong \frac{M}{S_i}$ is not uniserial, which gives a contradiction. Therefor $l(Soc(M))\leq 2$ and the result follows.
 \end{proof}

 \begin{theorem}\label{T11}
 Let $\Lambda$ be a right $3$-Nakayama artin algebra and $M$ be an indecomposable right $\Lambda$-module. Then the following statements hold.
 \item[$(a)$] Assume that $M$ is  $2$-factor serial. Then submodules of $M$ are $S_1, S_2$ and $rad(M)=soc(M)=S_1\oplus S_2$, where $S_i$ is a simple submodule of $M$ for each $i=1, 2$.
 \item[$(b)$] Assume that $M$ is local-colocal $3$-factor serial. Then submodules of  $M$ are $rad(M)$ which is indecomposable non-local $3$-factor serial of length $3$, two uniserial submodules $M_1$ and $M_2$ of length $2$ and $S:=soc(M)$ which is simple and $ll(M)=3$.
 \item[$(c)$] Assume that $M$ is local and non-colocal $3$-factor serial. Then submodules of $M$ are uniserial submodule $N$ of length $2$, simple submodules $S$ and $S'$ that $soc(N)=S'$,   $rad(M)=N\oplus S$ and $soc(M)=S^{'}\oplus S$ and $ll(M)=3$.
 \item[$(d)$] Assume that $M$ is colocal and non-local $3$-factor serial. Then submodules of $M$ are two uniserial modules $M_1$ and $M_2$ of length $2$ and $rad(M)=soc(M)=S$ which is simple and $ll(M)=2$.
 \end{theorem}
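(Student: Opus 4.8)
The plan is to prove the four statements in increasing order of the length of $M$: cases $(a)$ and $(d)$ concern indecomposables of length $3$, cases $(b)$ and $(c)$ those of length $4$, and I would settle $(a)$ and $(d)$ first and then reduce $(b)$ and $(c)$ to them by analysing $rad(M)$ together with the length-$3$ quotients $M/\Sigma$ for $\Sigma$ a simple submodule of $M$. In every case the opening move is the same: use Proposition \ref{p2}, Lemma \ref{l1} and \cite[Corollary 2.8]{NS} to pin down $l(M)$, $ll(M)$ and the radical filtration. The submodules are then located by a single dichotomy: a proper nonzero submodule $N$ either lies inside $rad(M)$, where its shape is forced by the (already determined) structure of $rad(M)$; or $N$ surjects onto $top(M)$, hence is local, and then a length count together with the indecomposability of $M$ pins $N$ down — the recurring trick being that if $N\cap rad(M)$ has a complement inside $M$, then $M$ splits off a nonzero summand, which is absurd.

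Cases $(a)$ and $(d)$: here $l(M)=3$ and $ll(M)=2$ — in $(a)$ by Proposition \ref{p2}$(a)$, in $(d)$ by Lemma \ref{l1}$(b)(ii)$ — so $rad^{2}(M)=0$, $rad(M)$ is semisimple, and (since $M$ is indecomposable of length $3$, hence not semisimple) $rad(M)\subseteq soc(M)\subsetneq M$. In $(a)$, $top(M)$ is uniserial, hence simple, so $M$ is local, $l(rad(M))=2=l(soc(M))$ and $soc(M)=rad(M)=:S_{1}\oplus S_{2}$; a submodule $N\not\subseteq soc(M)$ satisfies $N+rad(M)=M$, so $l(N)=1+l(N\cap rad(M))$, and if $N\ne M$ then $N\cap rad(M)$ is a simple submodule of $S_{1}\oplus S_{2}$, hence intersects trivially one of the two summands, say $S_{2}$; then $N\cap S_{2}=0$ and $l(N)+l(S_{2})=l(M)$, so $M=N\oplus S_{2}$, a contradiction. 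Thus the proper nonzero submodules in $(a)$ are exactly the submodules of $S_{1}\oplus S_{2}$. In $(d)$, $M$ is colocal and not local, so $soc(M)$ is simple; since $rad(M)\subseteq soc(M)$ and $rad(M)\ne 0$ we get $rad(M)=soc(M)=:S$ simple and $M/S=top(M)$ semisimple of length $2$, say $T_{1}\oplus T_{2}$. As every nonzero submodule of $M$ contains a simple submodule, hence contains $soc(M)=S$, the submodules of $M$ are the preimages of those of $M/S$: namely $S$ and the two length-$2$ preimages $M_{1},M_{2}$ of $T_{1},T_{2}$, each local of length $2$ and therefore uniserial with $rad(M_{i})=soc(M_{i})=S$.

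Cases $(b)$ and $(c)$: Proposition \ref{p2}$(c)$ gives $l(M)=4$, $ll(M)=3$; $M$ is local, so Lemma \ref{l1}$(b)(i)$ applies, and its case $r=t+2$ is excluded (it would force $ll(M)=2$), leaving $r=t+1$: $rad^{2}(M)$ is simple — call it $S'$ — $rad^{3}(M)=0$, and $rad(M)$ has length $3$ and is not local. If $M$ is colocal ($(b)$), the simple module $soc(M)$ contains the simple module $rad^{2}(M)$, so $soc(M)=rad^{2}(M)=:S$ and $soc(rad(M))=soc(M)\cap rad(M)=S$; if $rad(M)=A\oplus B$ then $rad(A)\oplus rad(B)=rad^{2}(M)=S$ is simple, so one summand (say $B$) is semisimple, hence $B\subseteq soc(rad(M))=S$ and $B=0$, so $rad(M)$ is indecomposable. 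Being non-local of length $3$ with Loewy length $2$, $rad(M)$ is $3$-factor serial and colocal, so $(d)$ applied to $rad(M)$ gives its proper nonzero submodules: two uniserial length-$2$ modules $M_{1},M_{2}$ and $S$. If $M$ is not colocal ($(c)$), Lemma \ref{l3} gives $l(soc(M))=2$, and since $rad^{2}(M)=S'\subseteq soc(M)$ we write $soc(M)=S'\oplus S$ with $S$ simple; indecomposability of $M$ forces $S\subseteq rad(M)$ (else $M=S\oplus rad(M)$), so $soc(rad(M))=soc(M)$ has length $2$; then $rad(M)$ — non-local of length $3$, Loewy length $2$, socle of length $2$ — must be decomposable, and the only possibility is $rad(M)=N\oplus S$ with $N$ uniserial of length $2$ and $soc(N)=rad^{2}(M)=S'$. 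To finish both cases I would read off the submodules via the length-$3$ quotients: $M/S'$ is local of length $3$ with semisimple radical of length $2$, hence $2$-factor serial, so $(a)$ describes its submodules (in particular $rad(M)/S'$ is its unique length-$2$ submodule), and in $(c)$ moreover $M/S$ is local with radical $\cong N$ uniserial of length $2$, hence uniserial of length $3$. In $(b)$, a proper submodule $N\not\subseteq rad(M)$ is local with $rad(N)=N\cap rad(M)$, and running through $l(N)\in\{1,2,3\}$ — using that $rad(N)\subseteq rad(M)$, that $soc(N)=S$ once $l(N)\geq 2$, and the structure of the $2$-factor serial $M/S$ — gives a contradiction each time, so the proper nonzero submodules of $M$ are $rad(M),M_{1},M_{2},S$ and $ll(M)=3$, which is $(b)$. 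In $(c)$, the submodules of $M$ containing $S$ form the chain $S\subset soc(M)\subset rad(M)\subset M$ coming from the uniserial $M/S$, those containing $S'$ are $S',N,soc(M),rad(M),M$ coming from the $2$-factor serial $M/S'$ via $(a)$, and since every nonzero submodule contains a simple submodule of $soc(M)=S'\oplus S$, the union of the two lists is the full set of proper nonzero submodules, namely $S,S',N,soc(M)=S'\oplus S,rad(M)=N\oplus S$, which is $(c)$.

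Most of the above is bookkeeping; the step I expect to need genuine care is the tacit claim, used when reading off the submodule lattices, that the displayed simple pieces are pairwise non-isomorphic — so that, for instance, $soc(M)=S_{1}\oplus S_{2}$ really has only the three proper nonzero submodules $S_{1}$, $S_{2}$, $S_{1}\oplus S_{2}$, and not a whole pencil of simple submodules. This reduces to excluding a repeated simple summand in $rad(M)/rad^{2}(M)$, that is, a "doubled arrow": since $M$ is a quotient of the indecomposable projective cover of $top(M)$ and $\Lambda$ is representation finite, such a configuration (a Kronecker-type subquiver) cannot occur, and the cleanest way to justify this is to invoke the special-biserial description of right $3$-Nakayama algebras obtained in Section $3$.
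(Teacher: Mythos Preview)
Your argument is correct and far more detailed than the paper's own proof, which disposes of $(a)$ and $(c)$ in one line by citing Proposition~\ref{p2} and Lemma~\ref{l3}, proves $(b)$ by invoking \cite[Theorem~2.6]{NS} to get that $rad(M)$ is an indecomposable non-local module of length $3$, and declares $(d)$ ``similar''. Your organization is different and arguably cleaner: you settle the length-$3$ cases $(a)$ and $(d)$ first by a direct radical/socle analysis, and then in $(b)$ and $(c)$ you analyse $rad(M)$ and the length-$3$ quotients $M/S$, $M/S'$ so as to reduce to the cases already done. In particular, your hands-on proof that $rad(M)$ is indecomposable in $(b)$ (via $rad(A)\oplus rad(B)=rad^{2}(M)$ simple) and decomposable in $(c)$ (a simple in the socle outside $rad^{2}(M)$ splits off) replaces the paper's appeal to \cite[Theorem~2.6]{NS}; both routes work, yours is more self-contained.

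The one point that needs adjustment is your closing paragraph. You are right that the exhaustive submodule lists require the relevant simple constituents to be pairwise non-isomorphic (otherwise $S_{1}\oplus S_{2}$, or $T_{1}\oplus T_{2}$, carries a whole family of simple submodules and the enumeration fails). The paper's terse proof does not address this at all. However, your proposed fix --- invoking the special-biserial description of Section~3 --- is not available here: Section~3 treats finite-dimensional $K$-algebras over an algebraically closed field, whereas Theorem~\ref{T11} is stated for arbitrary artin algebras, and in any case it is a forward reference. The cleaner route is to argue directly from representation-finiteness, which you already have by \cite[Theorem~2.18]{NS}: a repeated simple summand in $rad(P)/rad^{2}(P)$ for an indecomposable projective $P$ forces $\dim_{\mathrm{End}(S)}\mathrm{Ext}^{1}(T,S)\geq 2$ for simples $T,S$, and this yields infinitely many pairwise non-isomorphic indecomposables (equivalently, the valued quiver of a representation-finite artin algebra has no multiple edges). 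That argument stays within the artin-algebra setting and avoids the forward reference.
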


\begin{proof}
$(a)$ and $(c)$ follow from Proposition \ref{p2} and Lemma \ref{l3}.\\
$(b).$ Since $M$ is local, $rad(M)$ is  maximal submodule of $M$ and  $soc(M)\subseteq rad(M)$ and since $soc(M)$ is simple, by \cite[Theorem 2.6]{NS} and Proposition  \ref{p2},  $rad(M)$ is non-local indecomposable right $\Lambda$-module of length $3$. Therefore $rad(M)$ has two uniserial maximal submodules $M_1$ and $M_2$ of length $2$.\\
The proof of $(d)$ is similar to the proof of $(b)$.
\end{proof}

  \begin{proposition}\label{p9}
  Let $\Lambda$ be a right $3$-Nakayama artin algebra and $M$ be an indecomposable right $\Lambda$-module. Then $M$ is not projective if and only if one of the following situations holds.
  \item[$(a)$] $M$ is local and  there is an indecomposable projective right $\Lambda$-module $P$ such that $M$ is a factor of $P$ where $P$ satisfy one of the following situations:
  \begin{itemize}

 \item[$(i)$] $P$ is an uniserial projective right $\Lambda$-module. So $M\cong \frac{P}{rad^{i}(P)}$ for some $1\leq i<l(P)$.
 \item[$(ii)$] $P$ is a $2$-factor serial projective right $\Lambda$-module  that
 $rad(P)=soc(P)=S_1\oplus S_2$ where  $S_i$ is simple submodule for each $1\leq i \leq 2$. So $M$ is isomorphic to either  $\frac{P}{rad(P)}$ or $ \frac{P}{S_i}$ for some $1\leq i\leq 2$.
 \item[$(iii)$] $P$ is a $3$-factor serial projective-injective right $\Lambda$-module  and submodules of $P$ are  $rad(P)$ which is indecomposable non-local $3$-factor serial right $\Lambda$-module of length $3$, two uniserial modules $M_1$ and $M_2$ of length $2$ and $S=soc(P)$ which  is simple. Then $M$ is isomorphic to either $\frac{P}{rad(P)}$ or $\frac{P}{M_i}$ for some $1\leq i\leq 2$ or $\frac{P}{S}$.
 \item[$(iv)$] $P$ is a $3$-factor serial projective non-injective right $\Lambda$-module.  $rad(P)=N\oplus S$ where $N$ is an uniserial submodule  of length $2$ and $S$ is a simple submodule of $P$, $soc(P)=S^{'}\oplus S$ where  $S^{'}=soc(N)$. So $M$ is isomorphic to either $ \frac{P}{rad (P)} $ or $ \frac{P}{N} $ or $\frac{P}{soc(P)}$ or  $\frac{P}{S}$ or $\frac{P}{S^{'}}$.

  \end{itemize}
 \item[$(b)$] $M$ is non-local $3$-factor serial of length $3$ where submodules of $M$ are two uniserial modules $M_1$ and $M_2$ of length $2$ and $rad(M)=soc(M)=S$ which is simple. $M$ is either injective or a submodule of $3$-factor serial projective-injective indecomposable module.
  \end{proposition}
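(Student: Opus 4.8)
The plan is to use the local--colocal dichotomy of Corollary \ref{c1}, treating the ``local'' case and the ``colocal but not local'' case separately; in each case one replaces $M$ by a projective cover (respectively by an indecomposable injective containing it), and the structure results of this section then leave only finitely many possibilities. The ``if'' direction is easy: an indecomposable projective is $e\Lambda$ for a primitive idempotent $e$, hence local with simple top, so a module satisfying $(b)$ is non-local and not projective; and if $M$ satisfies $(a)$, then in each of $(i)$--$(iv)$ the submodule of $P$ that is factored out is non-zero and contained in $rad(P)$, so $M$ has the same (simple) top as $P$ while $l(M)<l(P)$, whence $P$ is a projective cover of $M$ and $M$ is not projective.

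For the ``only if'' direction, let $M$ be indecomposable and not projective; by Corollary \ref{c1} it is local or colocal. Suppose first $M$ is colocal but not local. Since uniserial modules are local and, by Proposition \ref{p2}$(a)$ and Theorem \ref{T11}$(a)$, so are $2$-factor serial modules, $M$ must be $3$-factor serial and non-local, so by Proposition \ref{p2}$(b)$ and Theorem \ref{T11}$(d)$ it has length $3$ and the submodule structure listed in $(b)$. Next I would embed $M$ in $E:=E(soc(M))$, an indecomposable injective: if $l(E)>4$ then $E$ is uniserial by Theorem \ref{p3}, forcing its submodule $M$ to be uniserial, which is false; hence $l(E)\in\{3,4\}$. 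If $l(E)=3$ then $M=E$ is injective. If $l(E)=4$, then $E$ is $3$-factor serial and local (it is not uniserial, not $2$-factor serial by Proposition \ref{p2}$(a)$, and a non-local $3$-factor serial module has length $3$ by Proposition \ref{p2}$(b)$); its projective cover $Q\longrightarrow E$ is indecomposable, and $l(Q)>4$ is impossible (else $Q$, hence its factor $E$, is uniserial), so $l(Q)=l(E)$ and $E=Q$ is projective; then by Theorem \ref{T11}$(b)$ the module $rad(E)$ is the unique submodule of length $3$ of the $3$-factor serial projective--injective module $E$, so $M=rad(E)$. This is precisely case $(b)$.

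Now suppose $M$ is local, and let $\pi\colon P\longrightarrow M$ be a projective cover. Since $top(M)$ is simple, $P$ is an indecomposable projective, hence local with simple top, and $P$ is $i$-factor serial for some $i\in\{1,2,3\}$. Using Proposition \ref{p2} and Lemma \ref{l3}, the possibilities for $P$ are: uniserial; $2$-factor serial (with the structure of Theorem \ref{T11}$(a)$); $3$-factor serial with $l(soc(P))=1$; or $3$-factor serial with $l(soc(P))=2$ (with the structure of Theorem \ref{T11}$(c)$, and then $P$ is not injective because $soc(P)$ is decomposable). In the case $l(soc(P))=1$, the injective-envelope argument of the previous paragraph applied to $E(soc(P))$ shows $P$ is injective, i.e.\ a $3$-factor serial projective--injective module with the structure of Theorem \ref{T11}$(b)$. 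In every case $\ker\pi$ is a non-zero submodule of $rad(P)$; Theorem \ref{T11}, together with the description of the submodules of a uniserial module, enumerates all of these, and reading off $M=P/\ker\pi$ in each case gives exactly the modules listed in $(i)$--$(iv)$.

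The main obstacle I anticipate is the identification of $P$ (and of $E(soc(M))$) among the indecomposable projectives (and injectives) --- concretely, proving that a colocal $3$-factor serial indecomposable module is automatically both projective and injective of length $4$. This is exactly where Theorem \ref{p3} does the work: passing to a projective cover or to an injective envelope cannot raise the length beyond $4$ without producing a uniserial module, which contradicts the non-uniseriality of $M$ (or of $rad(E)$). Once this is in place, the remainder is the routine bookkeeping of the non-zero submodules of $rad(P)$, which is immediate from Theorem \ref{T11}.
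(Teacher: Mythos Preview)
Your argument is correct and follows the same overall framework as the paper: the local--colocal dichotomy of Corollary~\ref{c1} together with the submodule classifications of Theorem~\ref{T11}. The paper's own proof is a single line citing Corollary~\ref{c1}, Theorem~\ref{T11}, and \cite[Lemma~5.3]{NS}; the substantive difference is that where the paper invokes the external \cite[Lemma~5.3]{NS} (presumably to handle the non-local colocal case and the projective--injective identification), you instead work this out internally using Theorem~\ref{p3}. Concretely, your bound-the-length arguments---that the injective envelope $E(soc(M))$ cannot exceed length~$4$ without becoming uniserial, and that a colocal $3$-factor serial projective of length~$4$ must equal its own injective envelope---are a self-contained replacement for that citation. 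This buys you independence from the companion paper at the cost of a few extra lines; the paper's version is terser but less transparent about why a non-local $M$ in case~$(b)$ must sit inside a projective--injective module when it is not itself injective.
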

\begin{proof}
It follows from  Corollary \ref{c1}, Theorem \ref{T11} and \cite[Lemma 5.3]{NS}.
\end{proof}

Now we characterize almost split sequences of right $3$-Nakayama algebras.

  \begin{theorem}\label{T3}
  Let $\Lambda$ be a right $3$-Nakayama artin algebra and $M$ be an indecomposable non-projective right $\Lambda$-module. Then one of the following situations hold:
  \begin{itemize}
\item[$(A)$]  Assume that $M\cong \frac{P}{rad^i(P)}$  where $P$ is uniserial projective for some $1\leq i <l(P)$.
\begin{itemize}

\item[$(i)$] If $M$ is a simple direct summand of $top(L)$, where $L$ is an indecomposable non-local, submodules of $L$ are including $M_1$ and $M_2$ that are uniserial modules of length $2$ and $soc(L)$ which is a simple module. In this case $M\cong \frac{L}{M_j}$ for some $1\leq j \leq 2$ and the following exact sequence
\begin{center}
$0\longrightarrow M_j \buildrel{ i_{1} }\over\longrightarrow L \buildrel{ \pi_{1} }\over\longrightarrow \frac{L}{M_ {j}}\longrightarrow0  $
\end{center}
is almost split sequence for each $j=1, 2$.
\item[$(ii)$] Otherwise, the sequence
  \begin{center}
  $0\longrightarrow \frac{rad (P)}{rad^{i+1}(P)} \buildrel{
  \begin{bmatrix}
\pi_2\\
i_2
  \end{bmatrix}
   }\over\longrightarrow \frac{rad (P)}{rad^{i}(P)}\oplus \frac{P}{rad^{i+1}(P)} \buildrel{[-i_{3},\pi_3]}\over\longrightarrow \frac{P}{rad^{i}(P)}\longrightarrow 0$
  \end{center}
    is an almost split sequence.

\end{itemize}
 \item[$(B)$] Assume that $M$ is a factor of $2$-factor serial projective right $\Lambda$-module $P$, where $rad(P)=soc(P)=S_1\oplus S_2$ that $S_1$ and $S_2$ are simple modules.
 \begin{itemize}
 \item[$(i)$]If $M\cong \frac{P}{S_i}$ for some $1\leq i \leq 2$, then the sequence
 \begin{center}
 $0\longrightarrow S_i \buildrel{i_4}\over \longrightarrow P\buildrel{\pi_4}\over \longrightarrow \frac{P}{S_i}\longrightarrow0$
 \end{center}
 is an almost split sequence.

 \item[$(ii)$] If $M\cong \frac{P}{rad (P)}$, then the sequence
 \begin{center}
 $0\longrightarrow P\buildrel{
  \begin{bmatrix}
\pi_5\\
\pi_6
  \end{bmatrix}
   }\over \longrightarrow \frac{P}{S_1}\oplus \frac{P}{S_2}\buildrel{[-\pi_7,\pi_8]}\over \longrightarrow \frac{P}{rad (P)}\longrightarrow0$
  \end{center}
 is an almost split sequence.
 \end{itemize}
 \item[$(C)$] Assume that $M$ is a factor of $3$-factor serial projective-injective right $\Lambda$-module $P$ and submodules of $P$ are $rad(P)$ which is indecomposable non-local $3$-factor serial of length $3$, two uniserial modules $M_1$ and $M_2$ of length $2$ and $S=soc(P)$ that is simple.
 \begin{itemize}
\item[$(i)$] If $M\cong \frac{P}{rad (P)}$, then the sequence
 \begin{center}
 $ 0\longrightarrow\frac{P}{S}\buildrel{
  \begin{bmatrix}
\pi_9\\
\pi_{10}
  \end{bmatrix}
   }\over \longrightarrow \frac{P}{M_1}\oplus\frac{P}{M_2}\buildrel{[-\pi_{11},\pi_{12}]}\over \longrightarrow \frac{P}{rad (P)}\longrightarrow0 $
 \end{center}
 is an almost split sequence.
 \item[$(ii)$]
 If $M\cong \frac{P}{M_i}$ for some  $1\leq i\leq 2$, then the sequence
 \begin{center}
 $ 0\longrightarrow\frac{M_i}{S}\buildrel{i_5}\over \longrightarrow \frac{P}{S}\buildrel{\pi_{13}}\over \longrightarrow \frac{P}{M_i}\longrightarrow0 $
 \end{center}
 is an almost split sequence.

 \item[$(iii)$]
  If $M\cong \frac{P}{S}$, then the sequence
 \begin{center}
 $ 0\longrightarrow rad (P)\buildrel{
  \begin{bmatrix}
\pi_{14}\\
 i_6
  \end{bmatrix}
   }\over \longrightarrow\frac{rad (P)}{S}\oplus P\buildrel{[-i_7,\pi_{15}]}\over \longrightarrow \frac{P}{S}\longrightarrow0 $
 \end{center}
 is an almost split sequence.
  \end{itemize}
 \item[$(D)$] Assume that $M$ is a factor of $3$-factor serial  non-injective  projective right $\Lambda$-module $P$. That   $rad(P)=N\oplus S$ where $N$ is an  uniserial submodule  of length $2$ and $S$ is a simple submodule of $P$ and $soc(P)=S^{'}\oplus S$ where $S^{'}=soc(N)$.
\begin{itemize}
\item[$(i)$] If $M\cong \frac{P}{rad (P)}$, then the sequence
\begin{center}
$0\longrightarrow \frac{P}{S^{'}}\buildrel{
  \begin{bmatrix}
\pi_{16}\\
\pi_{17}
  \end{bmatrix}   }\over\longrightarrow\frac{P}{N}\oplus \frac{P}{soc(P)}\buildrel{[-\pi_{18},\pi_{19}] }\over\longrightarrow \frac{P}{rad (P)}\longrightarrow0$
\end{center}
is an almost split sequence.
\item[$(ii)$] If $M\cong \frac{P}{N}$, then the sequence
\begin{center}
$ 0\longrightarrow \frac{N}{S^{'}}\buildrel{i_{8} }\over \longrightarrow \frac{P}{S^{'}}\buildrel{\pi_{20} }\over \longrightarrow \frac{P}{N}\longrightarrow 0$
\end{center}
is an almost split sequence.
\item[$(iii)$] If $M\cong \frac{P}{soc(P)}$, then the sequence
\begin{center}
$  0\longrightarrow P\buildrel{
  \begin{bmatrix}
\pi_{21}\\
\pi_{22}
  \end{bmatrix}
   }\over\longrightarrow \frac{P}{S}\oplus \frac{P}{S^{'}}\buildrel{[-\pi_{23},\pi_{24}]}\over\longrightarrow \frac{P}{soc(P)}\longrightarrow0$
\end{center}
is an almost split sequence.
\item[$(iv)$]  If $M\cong \frac{P}{S}$, then the sequence
\begin{center}
$ 0\longrightarrow S\buildrel{i_{9} }\over \longrightarrow P\buildrel{\pi_{25} }\over \longrightarrow \frac{P}{S}\longrightarrow 0$
\end{center}
is an almost split sequence.
\item[$(v)$] If $M\cong \frac{P}{S^{'}}$, then the sequence
\begin{center}
$ 0\longrightarrow N\buildrel{  \begin{bmatrix}
\pi_{26}\\
 i_{10}
  \end{bmatrix}}\over \longrightarrow \frac{N}{S^{'}}\oplus P\buildrel{[-i_{11},\pi_{27}]}\over \longrightarrow \frac{P}{S^{'}}\longrightarrow 0$
\end{center}
\end{itemize}
\item[$(E)$] Assume  that $M$ is a non-local $3$-factor serial right $\Lambda$-module of  length $3$ and submodules of $M$ are two uniserial maximal submodules $M_1$ and $M_2$ of length $2$ and $rad(M)=soc(M)=S$ which is simple.

Then the following exact sequence
\begin{center}
$ 0\longrightarrow S\buildrel{  \begin{bmatrix}
i_{12}\\
i_{13}
  \end{bmatrix}}\over\longrightarrow M_{1}\oplus M_{2}\buildrel{ [-i_{14}, i_{15}]}\over \longrightarrow M\longrightarrow 0$
\end{center}
is an almost split sequence.
\\
Where $i_j $ is an inclusion for each $1\leq j \leq 15$ and $\pi_j $ is a canonical epimorphism for each $1\leq j \leq 27$.

\end{itemize}
  \end{theorem}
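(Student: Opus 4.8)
The plan is to go through the five families $(A)$--$(E)$ one at a time. That these exhaust the indecomposable non-projective right $\Lambda$-modules is exactly Proposition~\ref{p9}, which, together with Theorem~\ref{T11}, Proposition~\ref{p2}, Lemma~\ref{l3} and Corollary~\ref{c1}, also hands us the full submodule lattice of every module that occurs; so each displayed complex can be written down explicitly with the stated inclusions $i_j$ and canonical surjections $\pi_j$. For each such complex one first checks exactness --- the kernel of the left-hand map is the intersection of the submodules being factored out, the cokernel of the right-hand map is their sum, and by the length and Loewy-length bounds of Proposition~\ref{p2} these coincide with the prescribed end terms --- and then non-splitness, which is immediate because in each case the middle term is strictly longer than $M$ and either is indecomposable or becomes indecomposable and non-isomorphic to $M$ after deleting an evident summand. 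The substantive task is then to recognize each of these short exact sequences as the almost split sequence ending at $M$.

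For that I would combine two seed computations with the process of knitting. First, every indecomposable projective module is local, so for each projective $P$ appearing in Proposition~\ref{p9} the inclusion $rad(P)\hookrightarrow P$ is irreducible; and when such a $P$ is in addition injective --- in particular the projective-injective module $P$ of case $(C)$ --- the standard mesh $0\to rad(P)\to rad(P)/soc(P)\oplus P\to P/soc(P)\to 0$ is an almost split sequence, which is precisely the one displayed in $(C)(iii)$. Dually, for an indecomposable injective $I$ the surjection $I\twoheadrightarrow I/soc(I)$ is irreducible. Second, in the uniserial family $(A)(ii)$, where $M\cong P/rad^{i}(P)$ with $P$ uniserial projective, every module in sight is uniserial with a transparent radical filtration, and the displayed sequence is then the usual Nakayama-type almost split sequence, verified exactly as over a Nakayama algebra. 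These seeds determine $\tau M$ and the relevant meshes in every case in which $P$ itself, or the cokernel of the inclusion of a simple submodule, occurs among the terms of the displayed sequence.

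The remaining subcases --- among them the interior factors such as $P/N$, $P/M_i$ and $P/soc(P)$, the simple module $M\cong L/M_j$ of $(A)(i)$, and the non-local module $M$ of $(E)$, whose sequence is the standard mesh attached to a module with exactly two maximal submodules meeting in its simple socle --- are obtained by knitting the Auslander--Reiten quiver. Since $\Lambda$ is representation finite, its Auslander--Reiten quiver is built up from the projectives by successive meshes, and at each stage the possible targets of an irreducible morphism out of a given indecomposable are tightly constrained: by Corollary~\ref{c1} and Proposition~\ref{p2} every indecomposable has length at most $4$ and is local or colocal, and Theorem~\ref{T11} then tells us precisely which of the handful of candidates actually receives such a map. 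So for each remaining subcase one verifies that the outer terms $A$ and $M$ of the displayed sequence are indecomposable, that the sequence is non-split, and that $A\cong\tau M$ --- the last point either from one of the two seeds above or from the mesh relation at $A$ once all irreducible maps out of $A$ are known --- whereupon uniqueness of the almost split sequence ending at $M$ closes the case. The module $L$ of $(A)(i)$ and the module $M$ of $(E)$ are exactly the length-$3$ non-local $3$-factor serial modules classified in Theorem~\ref{T11}$(d)$, so their appearance is already accounted for.

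The step I expect to be the main obstacle is the completeness of the middle term: showing, case by case, that the middle term $E$ is the direct sum of all the sources of irreducible morphisms into $M$, with the correct multiplicities, and contains nothing superfluous. Concretely this means excluding, for every indecomposable $N$ not on the displayed list, the existence of an irreducible map $N\to M$; the tools for this are just the length bound $l(N)\le 4$, the Loewy-length restrictions of Proposition~\ref{p2}, the local--colocal dichotomy of Corollary~\ref{c1}, and the explicit submodule descriptions of Theorem~\ref{T11}, and carrying out this elimination in each case is the heart of the argument. Once the middle term has been pinned down and the sequence is non-split with indecomposable outer terms, it is automatically the almost split sequence terminating at $M$.
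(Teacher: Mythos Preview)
Your strategy is genuinely different from the paper's. The paper does not compute $\tau M$ first and then knit; instead it verifies, for each displayed sequence, that the right-hand map is a \emph{right almost split morphism}. Concretely, given an indecomposable $V$ and a non-isomorphism $\nu\colon V\to M$, one splits into the two subcases ``$\nu$ is an epimorphism'' and ``$\nu$ is not an epimorphism''. In the first subcase Proposition~\ref{p9} (together with the fact that $M$ is local with a specified top) lists the handful of indecomposables surjecting onto $M$, and one checks each factors through the middle term; in the second subcase $\mathrm{Im}(\nu)$ is a proper submodule of $M$, and the explicit submodule lattices of Theorem~\ref{T11} show it already lives inside one of the displayed summands. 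Since the sequences are visibly exact, non-split, and have indecomposable end terms, this finishes each case in a few lines. What your proposal calls the ``main obstacle'' --- ruling out extraneous sources of irreducible maps into $M$ --- is exactly the content of this right-almost-split check, so the paper's route reaches it directly rather than via $\tau$ and meshes.

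Two specific points. First, a slip: it is not true that every indecomposable has length at most~$4$; uniserial modules may be arbitrarily long, so your constraint on possible targets of irreducible maps needs to be stated more carefully. Second, the sentence ``$A\cong\tau M$ \dots\ whereupon uniqueness of the almost split sequence ending at $M$ closes the case'' overstates what has been shown: a non-split extension of $M$ by $\tau M$ need not be almost split, so knowing the outer terms and non-splitness does not by itself pin down the sequence --- you still have to verify the middle term (as you acknowledge in the following paragraph). The paper's direct verification of the factoring property avoids both issues and makes the argument self-contained without appealing to knitting.
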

\begin{proof}
Put $g_1=[-i_3, \pi_3]$, $g_2=[-\pi_7,\pi_8]$, $g_3=[-\pi_{11}, \pi_{12}]$, $g_4=[-i_7,\pi_{15}]$, $g_5=[-\pi_{18}, \pi_{19}]$, $g_6=[-\pi_{23}, \pi_{24}]$, $g_7=[-i_{11},\pi_{27}]$ and $g_8= [-i_{14},i_{15}]$. It is easy to see that all given sequences are exact, non-split and have indecomposable end terms. It is enough to show that homomorphisms $\pi_1$, $g_1$, $\pi_4$,
  $ g_2 $, $g_3$, $ \pi_{13} $, $ g_4 $, $ g_5 $, $ \pi_{20} $, $ g_6 $, $ \pi_{25} $, $g_7$ and $g_8$ are right almost split morphisms.
\item[$(A)(i)$] Let  $V$ be an indecomposable right $\Lambda$-module and $\nu:V\longrightarrow M$ be a non-isomorphism.  Since $M$ is a simple module, so $\nu$ is an epimorphism. If $j=1$, then $V$ is isomorphic to either $M_2$ or $\frac{P}{rad^{i}(P)}$ for some $2\leq i < l(P)$. Since  $M_2$ is a submodule of $L$ and $top(\frac{P}{rad^{i}(P)})$ is a direct summand of $top(L)$ for each $2\leq i < l(P)$, there exists a homomorphism $h: V\longrightarrow L$ such that $\pi_1 h=\nu$. In case $j=2$ the proof is similar.
 \item[$(A)(ii)$] Let $V$ be an indecomposable right $\Lambda$-module and $\nu:V\longrightarrow \frac{P}{rad^{i}(P)}$ be a non-isomorphism. If $\nu$ is an epimorphism, then  $V\cong \frac{P}{rad^{s}(P)}$ for some $s>i$. This implies that, there is a homomorphism $h:V\longrightarrow \frac{rad(P)}{rad^{i}(P)}\oplus \frac{P}{rad^{i+1}(P)}$ such that $ \nu=g_1h $. Now assume that $\nu$ is not an epimorphism, then Im$(\nu)= \frac{rad^{t}(P)}{rad^{i}(P)}$ for some $t<i$. Then there is a homomorphism $h:V\longrightarrow \frac{rad(P)}{rad^{i}(P)}\oplus \frac{P}{rad^{i+1}(P)}$ such that $ \nu=g_1h $.
\item[$(B)(i)$] Let $V$ be an indecomposable right $\Lambda$-module and $\nu:V\longrightarrow \frac{P}{S_i}$ be a non-isomorphism. If $\nu$ is an  epimorphism, then $V\cong P$. This implies that there is an isomorphism $h:V\longrightarrow P$ such that $ \nu=\pi_{4}h $. Now assume that $\nu$ is not an epimorphism. Since $l(P)=3$,  $l(\frac{P}{S_i})=2$ and so Im$(\nu)$ is a simple submodule of $M$ which is  isomorphism to the direct summand of $soc(P)$. Then there is a homomorphism $h:V\longrightarrow P$ such that $ \nu=\pi_{4}h $.
\item[$(B)(ii)$] Let $V$ be an indecomposable right $\Lambda$-module and $\nu:V\longrightarrow \frac{P}{rad(P)}$ be a non-isomorphism. Since $\frac{P}{rad(P)}$ is simple,  $\nu$ is an epimorphism and $V$ is isomorphic to either $\frac{P}{S_i}$ for some $1\leq i\leq 2$ or $P$. So there is a homomorphism $h:V\longrightarrow \frac{P}{S_1}\oplus \frac{P}{S_2}$ such that $g_2 h=\nu$.
\item[$(C)(i)$] The proof is similar to the proof of the $B(ii)$.
\item[$(C)(ii)$] Let $V$ be an indecomposable right $\Lambda$-module and $\nu:V\longrightarrow \frac{P}{M_i}$ be a non-isomorphism. If $\nu$ is an epimorphism, then $V$ is isomorphic to either $ \frac{P}{S} $ or $P$. So there is a homomorphism $h:V\longrightarrow \frac{P}{S}$ such that $ \nu=\pi_{13}h $. Now assume that $\nu$ is not an  epimorphism. Then Im$(\nu)$ is simple and isomorphic to the direct summand of  $soc(\frac{P}{S}) $. This implies that there is a homomorphism $h:V\longrightarrow \frac{P}{S}$ such that $ \nu=\pi_{13}h $.
\item[$(C)(iii)$] Let $V$ be an indecomposable right $\Lambda$-module and $\nu:V\longrightarrow \frac{P}{S}$ be a non-isomorphism. If $\nu$ is an epimorphism, then $V\cong P$. This implies that there is a homomorphism $h:V\longrightarrow \frac{rad(P)}{S}\oplus P$ such that $ \nu=g_4 h $. Now assume that  $\nu$ is not an  epimorphism. Then Im$(\nu)$ is a submodule of $\frac{rad(P)}{S}$ and so there is a homomorphism $h:V\longrightarrow\frac{rad(P)}{S}\oplus P$ such that $ \nu=g_4 h $.
\item[$(D)(i)$] The proof is similar to the proof of the $B(ii)$.
\item[$(D)(ii)$] Let $V$ be an indecomposable right $\Lambda$-module and $\nu:V\longrightarrow \frac{P}{N}$ be a non-isomorphism. If $\nu$ is an epimorphism, then $V$ is isomorphic to either $ P$ or $ \frac{P}{S^{'}}$. So there is a homomorphism  $h:V\longrightarrow \frac{P}{S^{'}}$ such that $\pi_{20}h=\nu$.  Now assume that $\nu$ is not an epimorphism, so Im$(\nu)\cong S$ and $S$ is a direct summand of $ \frac{rad(P)}{S^{'}}$. This implies that there is a homomorphism  $h:V\longrightarrow \frac{P}{S^{'}}$ such that $\pi_{20}h=\nu$.
 \item[$(D)(iii)$] Let $V$ be an indecomposable right $\Lambda$-module and $\nu:V\longrightarrow \frac{P}{soc(P)}$ be a non-isomorphism. If $\nu$ is  an epimorphism, then $V$ is isomorphic to either $P$ or $ \frac{P}{S^{'}}$ or $ \frac{P}{S}$. So there is a homomorphism  $h:V\longrightarrow \frac{P}{S^{'}}\oplus \frac{P}{S}$ such that $g_{6}h=\nu$. If $\nu$ is not an epimorphism, then Im$(\nu)$ is simple and submodule of $ \frac{rad(P)}{soc(P)}$. This implies that, there is a homomorphism  $h:V\longrightarrow \frac{P}{S^{'}}\oplus \frac{P}{S}$ such that $g_{6}h=\nu$.
 \item[$(D)(iv)$] Let $V$ be an indecomposable right $\Lambda$-module and $\nu:V\longrightarrow \frac{P}{S}$ be a non-isomorphism. If $\nu$ is an epimorphism, then $V\cong P$. So there is a homomorphism  $h:V\longrightarrow P$ such that $\pi_{25}h=\nu$. If $\nu$ is not an epimorphism, then Im$(\nu)$ is a submodule of $\frac{rad (P)}{S}$ and $\frac{rad (P)}{S}\cong N$. This implies that there is a homomorphism  $h:V\longrightarrow P$ such that $\pi_{25}h=\nu$.
 \item[$(D)(v)$] Let $V$ be an indecomposable right $\Lambda$-module and $\nu:V\longrightarrow \frac{P}{S^{'}}$ be a non-isomorphism. If $\nu$ is an epimorphism, then $V\cong  P$ and there is a homomorphism  $h:V\longrightarrow \frac{N}{S^{'}}\oplus P$ such that $g_{7}h=\nu$. If $\nu$ is not an  epimorphism, then $Im(\nu)$ is a submodule of $\frac{P}{S^{'}}$. Therefor $\frac{P}{S^{'}}$ is a submodule of $ \frac{rad(P)}{S^{'}}=soc(\frac{P}{S^{'}})\cong S\oplus \frac{N}{S^{'}}$. This implies that there is a homomorphism $h:V\longrightarrow \frac{N}{S^{'}}\oplus P$ such that $g_{7} h=\nu$.
 \item[$(E)$] Let $V$ be an indecomposable right $\Lambda$-module and $\nu:V\longrightarrow M$ be a non-isomorphism.  Since $\nu$ is not an isomorphism and $M$ is not local, by Theorem \ref{T11}, $\nu$ is not epimorphism. Therefore $Im(\nu)$ is a submodule of $M$ and there is a homomorphism $ h: V\longrightarrow M_{1}\oplus M_{2} $ such that $g_8  h=\nu$.
 \end{proof}

\section{quivers of right $3$-Nakayama algebras}

In this section we describe finite dimensional right $3$-Nakayama algebras in terms of their quivers with relations.

 A finite dimensional $K$-algebra $\Lambda=\frac{KQ}{I}$ is called special biserial algebra provided $(Q,I)$ satisfying the following conditions:
\begin{itemize}
\item[$(1)$] For any vertex $a\in Q_0$, $|a^+|\leq 2$ and   $|a^-|\leq 2$.
\item[$(2)$] For any arrow $\alpha \in Q_1$, there is at most one arrow $\beta$ and at most one arrow $\gamma$ such that $\alpha\beta$ and $\gamma\alpha$ are not in $I$.
\end{itemize}
Let $\Lambda=\frac{KQ}{I}$ be a special biserial finite dimensional $K$-algebra. A walk $w=c_1c_2\cdots c_n$ in $Q$ is called string of length $n$ if $c_i\neq c_{i+1}^{-1}$ for each $i$ and no subwalk of $w$ nor its inverse is in $I$. In addition, we have strings of length zero, for any $a\in Q_0$ we have two strings of length zero, denoted by $1_{(a,1)}$ and $1_{(a,-1)}$. We have $s(1_{(a,1)})=t(1_{(a,1)})=s(1_{(a,-1)})=t(1_{(a,-1)})=a$ and $1_{(a,1)}^{-1}=1_{(a,-1)}$. A string $w=c_1c_2\cdots c_n$ with $s(w)=t(w)$ such that each
power $w^m$ is a string, but $w$ itself is not a proper power of any strings is called band. We denote by $\mathcal{S}(\Lambda)$ and $\mathcal{B}(\Lambda)$ the set of all strings of $\Lambda$ and the set of all bands of $\Lambda$, respectively. Let $\rho$ be the equivalence relation on $\mathcal{S}(\Lambda)$ which identifies every string $w$ with its inverse $w^{-1}$ and $\sigma$ be the equivalence relation on $\mathcal{B}(\Lambda)$ which identifies every band $w=c_1c_2\cdots c_n$ with the cyclically permuted bands $w_{(i)}=c_ic_{i+1}\cdots c_nc_1\cdots c_{i-1}$ and their inverses $w_{(i)}^{-1}$, for each $i$.
Butler and Ringel in \cite{BR} for each string $w$ defined a unique string module $M(w)$ and for each band $v$ defined a family of band modules $M(v,m,\varphi)$ with $m\geq 1$ and $\varphi\in Aut(K^m)$. Let $\widetilde{\mathcal{S}}(\Lambda)$ be the complete set of representatives of strings relative to $\rho$ and $\widetilde{\mathcal{B}}(\Lambda)$  be the complete set of representatives of bands relative to $\sigma$. Butler and Ringel in \cite{BR} proved that, the modules $M(w)$, $w\in \widetilde{\mathcal{S}}(\Lambda)$ and the modules $M(v,m,\varphi)$ with $v\in \widetilde{\mathcal{B}}(\Lambda)$, $m\geq 1$ and $\varphi\in Aut(K^m)$ provide complete list of pairwise non-isomorphic indecomposable $\Lambda$-modules.
Indecomposable $\Lambda$-modules are either string modules or band modules or non-uniserial projective-injective modules (see \cite{BR} and \cite{WW}). If $\Lambda$ is a special biserial algebra of finite type, then any indecomposable $\Lambda$-module is either string module or non-uniserial projective-injective module.

\begin{remark}\label{r1}
Let $Q$ be a finite quiver, $I$ be an admissible ideal of $Q$, $Q^{'}$ be a subquiver of $Q$ and $I^{'}$ be an admissible ideal of $Q^{'}$ which is restriction of $I$ to $Q^{'}$. Then there exists a fully faithful embedding
$F:rep_K(Q^{'}, I^{'})\longrightarrow rep_K(Q, I) $
\end{remark}

\begin{proposition}\label{p1} Any basic connected finite dimensional right $3$-Nakayama $K$-algebra is a special biserial algebra of finite type.
\end{proposition}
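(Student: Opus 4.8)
Write $\Lambda=KQ/I$ with $Q$ a finite connected quiver and $I$ an admissible ideal, which is possible since $\Lambda$ is basic and connected. A right $3$-Nakayama algebra is representation finite by \cite[Theorem 2.18]{NS}, so the ``finite type'' clause is free and it remains only to verify, for $(Q,I)$, the two defining conditions of a special biserial algebra. The plan is to deduce both from the classification of indecomposables --- concretely, from Theorem \ref{p3} and Proposition \ref{p2} --- by testing them on the indecomposable projectives and injectives. I would first record that $Q$ has no multiple arrows between any two (not necessarily distinct) vertices: two arrows $\beta_1,\beta_2\colon b\to c$ with $b\neq c$ give, via the zero-extension embedding of Remark \ref{r1} applied to the subquiver $(\{b,c\},\{\beta_1,\beta_2\})$ --- which has no composable pair of arrows, so the restricted ideal is zero --- a fully faithful copy of the module category of the Kronecker algebra inside $\mathrm{mod}\,\Lambda$, contradicting representation finiteness; and two loops at $v$ would make $e_v\Lambda e_v$ a local algebra with $\dim_K rad/rad^2\geq 2$, hence representation infinite, and $\mathrm{mod}\,e_v\Lambda e_v$ embeds fully faithfully into $\mathrm{mod}\,\Lambda$ by idempotent truncation. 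Consequently distinct arrows with a common source (respectively target) have distinct targets (respectively sources).

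For the valency condition I would use the following consequence of Proposition \ref{p2} and Theorem \ref{p3}: \emph{every indecomposable $\Lambda$-module of Loewy length at most $2$ has length at most $3$} (an indecomposable of length greater than $4$ is uniserial, hence of Loewy length equal to its length; one of length $4$ is local, hence uniserial or $3$-factor serial local, of Loewy length $4$ or $3$). Since $P_a/rad^2P_a$ is indecomposable, local and of Loewy length at most $2$, this gives $|a^+|=l(rad\,P_a/rad^2P_a)\leq 2$; since $soc^2(I_a)$ is a submodule of the colocal module $I_a$, it is colocal, hence indecomposable, of Loewy length at most $2$, so $|a^-|=l(soc^2I_a/soc\,I_a)\leq 2$.

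For the relation condition the key preliminary claim is that $\alpha\Lambda$ is uniserial for every arrow $\alpha$. Indeed $\alpha\Lambda$ is cyclic, hence local, and lies in $rad\,P_{s(\alpha)}$; if it were not uniserial it would have length at least $3$, forcing $l(P_{s(\alpha)})\geq 4$, so by Theorem \ref{p3} one would get $l(P_{s(\alpha)})=4$ and $rad\,P_{s(\alpha)}=\alpha\Lambda$, which by Proposition \ref{p2} is then $2$-factor serial; but then $P_{s(\alpha)}$ is $3$-factor serial local, so by Theorem \ref{T11}$(b)$--$(c)$ its radical is non-local or decomposable, contradicting that $\alpha\Lambda$ is local and indecomposable. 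Granting this, suppose the relation condition fails at an arrow $\alpha\colon a\to b$: there are distinct arrows $\beta_1,\beta_2$ out of $b$ with $\alpha\beta_1,\alpha\beta_2\notin I$. By the first paragraph $c_1:=t(\beta_1)\neq t(\beta_2)=:c_2$, and since $\alpha\Lambda$ is uniserial the submodules $\alpha\beta_1\Lambda$ and $\alpha\beta_2\Lambda$ are comparable with distinct tops, so one strictly contains the other; hence, after renaming, $\alpha\beta_2\in rad(\alpha\beta_1\Lambda)$, so there is a nonzero path $w\colon c_1\to c_2$ of positive length with $\alpha\beta_1w\neq 0$. The last arrow of $w$ is an arrow into $c_2$ distinct from $\beta_2$ (here one uses the no-multiple-arrows fact, with a little case-checking in the degenerate positions), the two distinct arrows into $c_2$ make $\Lambda e_{c_2}$ non-uniserial, and a short count of paths ending at $c_2$ (among $e_{c_2}$, the last arrow of $w$, $\beta_2$, $\beta_1w$, $\alpha\beta_1w$, $\alpha\beta_2$) shows $l(\Lambda e_{c_2})\geq 5$; therefore $I_{c_2}\cong D(\Lambda e_{c_2})$ is a non-uniserial indecomposable of length $\geq 5$, contradicting Theorem \ref{p3}.

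What remains is the ``incoming'' variant --- distinct arrows $\gamma_1,\gamma_2$ into $a=s(\alpha)$ with $\gamma_i\alpha\notin I$ --- and this, together with the degenerate positions of vertices, is where I expect the genuine work. The point is a dichotomy according to whether some $\gamma_i\alpha$ lies in $rad^3\Lambda$. If none does, then $soc^3(I_b)$ has length at least $4$ and $soc^3(I_b)/soc^2(I_b)$ contains $S_{s(\gamma_1)}\oplus S_{s(\gamma_2)}$, of length $2$; but an indecomposable colocal module of length at least $4$ is either uniserial (so this layer is simple) or, by Theorem \ref{p3} and Theorem \ref{T11}$(b)$, a $3$-factor serial local-colocal module of length $4$, for which $soc^2=rad$ and $soc^3/soc^2=top$ is simple --- a contradiction in either case. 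If some $\gamma_i\alpha$ lies in $rad^3\Lambda$, it factors through a longer path beginning with a second arrow out of $s(\gamma_i)$, and the length argument of the previous paragraph, now applied to $P_{s(\gamma_i)}$ rather than to an injective, produces a non-uniserial indecomposable projective of length $\geq 5$. Either way Theorem \ref{p3} is contradicted, so $(Q,I)$ is special biserial. In sum, the engine of the proof is Theorem \ref{p3} together with the bound ``Loewy length $\leq 2$ forces length $\leq 3$'', applied repeatedly to carefully chosen projectives and injectives; the delicate part, and the reason both families are needed, is to defeat the possibility of a relation that inflates the degree of a length-$2$ path.
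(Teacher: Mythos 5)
Your route is genuinely different from the paper's. The paper argues entirely by forbidden subquivers: for each failure of the special biserial conditions it exhibits an explicit subquiver $(Q',I')$, an indecomposable representation of it of a prescribed dimension vector, and uses Remark \ref{r1} together with \cite[Proposition 2.8]{NS} to produce a $5$- or $6$-factor serial $\Lambda$-module. You instead work intrinsically, extracting the quiver constraints from the classification results of Section 2. Your first half is clean and, to my mind, slicker than the paper's: the observation that an indecomposable of Loewy length $\le 2$ has length $\le 3$ (immediate from Theorem \ref{p3} and Proposition \ref{p2}) applied to $P_a/rad^2P_a$ and $soc^2(I_a)$ gives $|a^+|,|a^-|\le 2$ in two lines, and the no-multiple-arrows and ``$\alpha\Lambda$ is uniserial'' arguments (the latter via Theorem \ref{T11}$(b)$--$(c)$) are correct. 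It is worth noting that the paper never rules out multiple arrows explicitly, so your preliminary step is not redundant.

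The gap is in the relation conditions, and it sits exactly where you flagged it: the degenerate configurations. In the outgoing case, when $\beta_1$ is a loop at $b=c_1$ the path $w$ can be $\beta_2$ itself, so you do not get a second arrow into $c_2$, and the module $\Lambda e_{c_2}$ need not be non-uniserial; the contradiction must then come from somewhere else. Moreover, in that configuration $\alpha\beta_2$ lies in $rad^3\Lambda$, so when you try to salvage it via the incoming condition at $\beta_2$ you land precisely in the second branch of your dichotomy (``some $\gamma_i\alpha\in rad^3\Lambda$''), which is only gestured at: saying the relation ``factors through a longer path beginning with a second arrow out of $s(\gamma_i)$'' is not justified, since the length-$\ge 3$ paths replacing $\gamma_i\alpha$ may all begin with $\gamma_i$ itself. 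These loop cases are exactly the paper's ``Case 2'' subcases, and there the paper needs an extra input your sketch never uses, namely that $I$ is admissible so $\alpha^n\in I$ for some $n$, which is what bounds the loop and yields a $6$-factor serial module of dimension vector $[2,4]^t$ (resp.\ $[4,2]^t$). A minor further slip: $\alpha\beta_1w$ and $\alpha\beta_2$ may be proportional in $\Lambda$, so your list of six paths ending at $c_2$ only guarantees five independent ones --- still enough for the contradiction, but the count as written is not right. In short: the strategy is viable and attractive, but the loop/cyclic configurations and the $rad^3$ branch need to be worked out in full (or reduced, as in the paper, to explicit bounded subquivers) before this is a proof.
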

\begin{proof} Let $\Lambda=KQ/I$ be a right $3$-Nakayama algebra. By Theorem \cite[Theorem 2.18]{NS}, $\Lambda$ is of finite type. We show that for every $a\in Q_0$, $|a^+|\leq 2$.
If there exists a vertex $a$ of $Q_0$ such that $|a^+|\geq 3$, then we have two cases.
\begin{itemize}
\item Case $1$: The algebra $\Lambda_1=KQ_1$ given by the quiver $Q_1$
$$\hskip .5cm \xymatrix{
&{1}\ar @{<-}[dl]_{\alpha_{1}}\\
{4}\ar[r]^{\alpha_{2}}&{2}\\
&{3}\ar @{<-}[ul]^{\alpha_{3}}}\hskip .5cm$$
which is a subquiver of $Q$, is a subalgebra of $\Lambda$. There is  an indecomposable representation $M$ of $Q_1$ such that
  $\mathbf{dim}M=[1,1,1,2]^t$. $M$ is not local and by \cite[proposition 2.8]{NS},  $M$ is a $5$-factor serial right $\Lambda_1$-module. Therefore by using Remark \ref{r1}, there is a $5$-factor serial right $\Lambda$-module which is a contradiction.
\item Case $2$: The algebra $\Lambda_2=\frac{KQ_2}{I_2}$ given by the quiver $Q_2$
$$\begin{matrix}\xymatrix{
&{2}\ar @{<-}[dl]_{\gamma}\\
{3}\ar@(ul,dl)_{\alpha} \ar[r]_{\beta}&{1}}
\end{matrix}\hskip.5cm$$
which is a subquiver of $Q$ and the ideal $I_2$ which is a restriction of $I$ to $Q_2$, is a subalgebra of $\Lambda$. There is an indecomposable representation $M$ of $(Q_2,I_2)$ such that $\mathbf{dim}M=[1,1,3]^t$. $M$ is not local and by \cite[proposition 2.8]{NS}, $M$ is a $5$-factor serial right $\Lambda_2$-module. Then, there is a $5$-factor serial right $\Lambda$-module which is a contradiction.
\end{itemize}
Now we show that for every $a\in Q_0$, $|a^-|\leq 2$. Assume that there exists a vertex $a$ of $Q_0$ such that $|a^-|\geq 3$, then we have two cases.
\begin{itemize}
\item Case $1$: The algebra $\Lambda_1=KQ_1$ given by the quiver $Q_1$
$$\hskip .5cm \xymatrix{
&{2}\ar [dr]^{\alpha_{2}}\\
&{3}\ar[r]^{\alpha_{3}}&{1}\\
&{4}\ar [ur]_{\alpha_{4}}}\hskip .5cm$$
which is a subquiver of $Q$, is a subalgebra of $\Lambda$. There is  an indecomposable representation $M$ of $Q_1$ such that $\mathbf{dim}M=[2,1,1,1]^t$. $M$ is not local and by \cite[proposition 2.8]{NS}, $M$ is a $5$-factor serial right $\Lambda_1$-module. Then there is a $5$-factor serial right $\Lambda$-module which is a contradiction.
\item Case $2$: The algebra $\Lambda_2=\frac{KQ_2}{I_2}$ given by the quiver $Q_2$
\begin{center}
$\begin{matrix}\xymatrix{{2}\ar[dr]^{\beta}\\
{3}\ar [r]_{\gamma}&{1}\ar@(ur,dr)^{\alpha}
}
\end{matrix}\hskip.5cm$
\end{center}
which is a subquiver of $Q$ and $I_2$ is a restriction of I to $Q_2$, is a subalgebra of $\Lambda$. There is an indecomposable representation $M$ of $(Q_2,I_2)$ such that $\mathbf{dim}M=[3,1,1]^t$. $M$ is not local and by \cite[proposition 2.8]{NS}, $M$ is a $5$-factor serial right $\Lambda_2$-module. Therefore  there is a $5$-factor serial right $\Lambda$-module which is a contradiction.
\end{itemize}
Now we show that for any $\alpha\in Q_1$, there is at most one arrow $\beta$ and at most one arrow $\gamma$ such that $\alpha\beta$ and $\gamma\alpha$ are not in $I$. Now  assume that there exist $\alpha, \beta_{1}, \beta_{2}\in Q_1$ such that $\alpha\beta_1$ and $\alpha\beta_2$ are not in $I$. Then we have two cases.
 \begin{itemize}
 \item Case $1$: The algebra $\Lambda_1=KQ_1$ given by the quiver $Q_1$
 $$\hskip .5cm \xymatrix{
&&{1}\ar @{<-}[dl]_{\beta_1}\\
{4}\ar [r]^{\alpha}&{3}\ar [dr]_{\beta_2}\\
&&{2}}\hskip .5cm$$
which is a subquiver of $Q$, is a subalgebra of $\Lambda$. There is an indecomposable representation $M$ of $Q_1$ such that $\mathbf{dim}M=[1,1,2,1]^t$ that $M$ is not local and by \cite[proposition 2.8]{NS},  $M$ is a $5$-factor serial right $\Lambda_1$-module.  Therefore  there is a $5$-factor serial right $\Lambda$-module which is a contradiction.
  \item Case $2$: The algebra $\Lambda_2=\frac{KQ_2}{I_2}$ given by the quiver $Q_2$
\begin{center}
$\begin{matrix}\xymatrix{{2}\ar@(ul,dl)_{\alpha} \ar[r]^{\beta}&{1}}
\end{matrix}\hskip.5cm$
\end{center}
which is a subquiver of $Q$ and $I_2$ is a restriction of $I$ to $Q_2$, is a subalgebra of $\Lambda$. Since $R^n\subseteq I$ for some $n\geq 3$, $\alpha^{n}\in I$ and $\alpha^{n-1}\beta\in I$. Then there is an indecomposable representation $M$ of $(Q_2, I_2)$ such that $\mathbf{dim}M=[2,4]^t$. $M$ is not local and by \cite[proposition 2.8]{NS}, $M$ is a $6$-factor serial right $\Lambda_2$-module. Therefore there is a $6$-factor serial right $\Lambda$-module which is a contradiction.\\
Now assume that there exist arrows $\alpha, \gamma_1, \gamma_2 \in Q_1$ such that $\gamma_1\alpha$ and $\gamma_2\alpha$ are not in $I$. Then we have two cases.
\item Case $1$: The algebra $\Lambda_3=KQ_3$ given by the quiver $Q_3$
 $$\hskip .5cm \xymatrix{
{3}\ar [dr]_{\gamma_1}\\
&{2}\ar [r]^{\alpha}&{1} \\
{4}\ar [ur]_{\gamma_2}}\hskip .5cm$$
which is a subquiver of $Q$, is a subalgebra of $\Lambda$. There is an indecomposable representation $M$ of $Q_3$ such that $\mathbf{dim}M=[1,2,1,1]^t$,  $M$ is not local and by \cite[proposition 2.8]{NS}, $M$ is a $5$-factor serial right $\Lambda_3$-module. Therefore there is a $5$-factor serial right $\Lambda$-module which is a contradiction.

\item Case $2$: The quiver $Q_4$ given by
\begin{center}
$\begin{matrix} \xymatrix{{2}\ar[r]_{\beta}&{1}\ar@(ur,dr)^{\alpha}}
\end{matrix}\hskip.5cm$
\end{center}
 is a subquiver of $Q$. Let $I_4$ be the restriction of $I$ to $Q_4$. Then $\Lambda_4=KQ_4/I_4$ is a subalgebra of $\Lambda$. Since $R^n\subseteq I$ for some $n\geq 3$, $\alpha^{n}\in I$ and $\beta\alpha^{n-1}\in I$. There is an indecomposable representation $M$ of $(Q_4, I_4)$ such that $\mathbf{dim}M=[4,2]^t$, $M$ is not local and by \cite[proposition 2.8]{NS}, $M$ is a $6$-factor serial right $\Lambda_4$-module. Therefore, there is a $6$-factor serial right $\Lambda$-module which is a contradiction.\\
\end{itemize}
 \end{proof}

\begin{theorem}\label{T4}
Let $ \Lambda=\frac{KQ}{I} $ be a basic and connected  finite dimensional $K$-algebra. Then
$ \Lambda $ is a right $3$-Nakayama algebra if and only if $ \Lambda $ is a special biserial algebra of finite type that $\left( Q, I\right) $ satisfying the following conditions:
\begin{itemize}
\item[$(i)$] If there exist a walk $w$ and two different arrows $w_1$ and $w_2$ with the same target such that  $w_1^{+1}w_2^{-1}$ is a subwalk of $w$, then $w=w_1^{+1}w_2^{-1}$.
\item[$(ii)$] If there exist a walk $w$ and two different arrows $w_1$ and $w_2$ with the same source such that  $w_1^{-1}w_2^{+1}$ is a subwalk of $w$, then $length(w)\leq 3$.
\item[$(iii)$] If  there exist two paths $p$ and $q$ with the same target and the same source such that  $p-q\in I$, then $length(p)=length(q)=2$.
\item[$(iv)$]  At least one of the following conditions holds.
\begin{itemize}
\item[$(a)$] There exists a vertex $a$ of $Q_0$ such that, $|a^-|=2$.
\item[$(b)$] There exist a walk $w$ of length $3$ and two different arrows $w_1$ and $w_2$ with the same source such that  $w_1^{-1}w_2^{+1}$ is a subwalk of $w$.
\item[$(c)$]  There exist two paths $p$ and $q$ with the same target and the same source such that $p-q\in I$ and $length(p)=length(q)=2$.
\end{itemize}

\end{itemize}
\end{theorem}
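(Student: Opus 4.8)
The plan is to prove both directions by combining the classification of indecomposable modules over special biserial algebras of finite type (string modules plus non-uniserial projective-injectives, as recalled before Remark \ref{r1}) with the length/Loewy-length criterion for right $3$-Nakayama algebras supplied by Theorem \ref{p3} and Corollary \ref{c3}. For the forward direction, assume $\Lambda = KQ/I$ is right $3$-Nakayama. By Proposition \ref{p1} it is already special biserial of finite type, so it only remains to verify conditions $(i)$--$(iv)$. For $(i)$ and $(ii)$, I would argue contrapositively: a walk $w$ violating $(i)$ (a subwalk $w_1^{+1}w_2^{-1}$ with $w$ strictly longer than $w_1^{+1}w_2^{-1}$) or $(ii)$ (a subwalk $w_1^{-1}w_2^{+1}$ inside a walk of length $\geq 4$) is a string, and the associated string module $M(w)$ is non-uniserial of length $\geq 5$ in the first case, and in the second case one checks directly that $M(w)$ is a non-uniserial module of length $\geq 5$ — in either case contradicting the "length $>4$ implies uniserial" half of Theorem \ref{p3}. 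One must be slightly careful that $w$ is actually a string (no subwalk in $I$); but by restricting to the subquiver on the vertices of $w$ with the induced relations and invoking Remark \ref{r1}, one can produce a string module of the relevant shape and transport it back to $\Lambda$, exactly as in the proof of Proposition \ref{p1}. For $(iii)$, a commutativity relation $p - q \in I$ with $\mathrm{length}(p) = \mathrm{length}(q) \geq 3$ (they must have equal length since $\Lambda$ is special biserial) yields, on the appropriate subquiver, an indecomposable representation of length $\geq 5$ that is not uniserial (the two paths force two distinct composition series), again contradicting Theorem \ref{p3}; and $\mathrm{length} = 2$ is forced because a commutativity square of side length $1$ would make two arrows with the same source and target non-zero, which is excluded.

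For condition $(iv)$, the point is that $\Lambda$ is assumed to be \emph{not} Nakayama and \emph{not} right $2$-Nakayama, so there must exist some indecomposable $t$-factor serial module with $t \geq 3$; concretely, by Proposition \ref{p2} there is an indecomposable module of length $3$ or $4$ that fails to be uniserial. Tracing through the special biserial combinatorics, such a module is a string module $M(w)$ with a "peak" or "valley" — i.e. $w$ contains a subwalk $w_1^{+1}w_2^{-1}$ (two arrows into a common target, giving a non-local module of length $3$, case $(a)$) or $w_1^{-1}w_2^{+1}$ (two arrows out of a common source; combined with the length-$\leq 3$ bound from $(ii)$ this gives case $(b)$) — or else it comes from a commutativity relation $p - q$ of length $2$ (case $(c)$). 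So at least one of $(a),(b),(c)$ must hold. This is essentially a case analysis on the shape of the smallest non-uniserial string/representation.

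For the converse, assume $\Lambda$ is special biserial of finite type satisfying $(i)$--$(iv)$, and use Corollary \ref{c3}$(b)$: it suffices to show every indecomposable of length $>4$ is uniserial, every indecomposable of length $4$ is local, and there exists a non-uniserial non-projective indecomposable. Since $\Lambda$ is finite type, every indecomposable is a string module or a non-uniserial projective-injective. A non-uniserial string module $M(w)$ has a walk $w$ that changes direction; condition $(i)$ forces any "down-then-up at a common target" pattern to occupy \emph{all} of $w$, so $l(M(w)) = 3$ in that case, while condition $(ii)$ bounds "up-then-down at a common source" walks to length $\leq 3$, hence $l(M(w)) \leq 4$; and a length-$4$ such string module has top $= $ the single source vertex, so it is local. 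Condition $(iii)$ handles the projective-injective (biserial) indecomposables, forcing their socle/radical layers to be short enough that their length is $\leq 4$ and, when $4$, local. Finally condition $(iv)$ guarantees, in each of its three sub-cases, the existence of a non-uniserial indecomposable that is not projective (e.g. in case $(a)$ the string module at the length-$3$ valley has simple socle and is a proper submodule of an injective, hence non-projective by a dimension/socle count — or one invokes Proposition \ref{p9}), so $\Lambda$ is neither Nakayama nor right $2$-Nakayama, and Corollary \ref{c3}$(b)$ applies.

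The main obstacle I expect is the bookkeeping in the converse: systematically enumerating the possible shapes of non-uniserial string modules and biserial projective-injectives over a special biserial algebra and checking, shape by shape, that conditions $(i)$--$(iii)$ pin their length to $\leq 4$ with locality at length $4$. The "valley gives a non-projective module" verification needed for $(iv)$ is the other delicate point, since it requires distinguishing the string module sitting at a valley from the projective cover of its top, which is where Proposition \ref{p9} and the explicit submodule descriptions of Theorem \ref{T11} do the real work.
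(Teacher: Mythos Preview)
Your overall strategy matches the paper's closely, but there are two concrete gaps.

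For condition $(i)$ in the forward direction, a walk $w$ properly containing $w_1^{+1}w_2^{-1}$ has length $\geq 3$, so the string module $M(w)$ has length $\geq 4$, not $\geq 5$ as you claim. Your appeal to the ``length $>4$ implies uniserial'' half of Theorem~\ref{p3} therefore misses the length-$4$ case. The fix is to observe that any such $M(w)$ is non-local (the valley $w_1^{+1}w_2^{-1}$ forces at least two generators no matter how the walk is extended), so at length $4$ one contradicts the \emph{other} clause of Theorem~\ref{p3}. The paper does exactly this, via a short case analysis on the possible length-$3$ extensions of $w_1^{+1}w_2^{-1}$, exhibiting a non-local length-$4$ module (hence $4$-factor serial) in each case.

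For the converse you route through Corollary~\ref{c3}$(b)$, which requires a non-uniserial indecomposable that is \emph{not projective}. In case $(iv)(b)$ the natural length-$4$ string module is local and may very well be projective, so this step is not justified as written; a similar issue arises in case $(iv)(c)$, where the biserial module is projective-injective. The paper avoids this entirely: it shows directly that under $(i)$--$(iii)$ every string has one of the shapes $w_1^{+1}\cdots w_n^{+1}$, $w_1^{-1}w_2^{+1}$, $w_1^{+1}w_2^{-1}$, or $w_1^{-1}w_2^{-1}w_3^{+1}$ (hence is at most $3$-factor serial), that non-uniserial projective-injectives are $3$-factor serial by $(iii)$, and that $(iv)$ guarantees a module of one of the $3$-factor serial shapes --- never invoking non-projectivity. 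Your approach can be rescued by passing to a suitable proper quotient of the problematic projective, but the paper's direct classification is cleaner. (Two further minor points: your parenthetical in $(iii)$ that $p$ and $q$ must have equal length ``since $\Lambda$ is special biserial'' is unjustified and unnecessary --- the paper instead builds the string $p_{l-1}^{+1}p_l^{+1}q_r^{-1}$ when $l\geq 3$; and for the forward direction of $(iv)$ the paper simply cites the quiver characterization of right $2$-Nakayama algebras from \cite{NS} rather than the direct string-shape analysis you sketch.)
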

\begin{proof}
Assume that $\Lambda$ is a right $3$-Nakayama algebra.
By Proposition \ref{p1},  $ \Lambda $ is a special biserial algebra of finite  type.
 Assume that the  condition $(i)$ does not hold. Then there exists a walk $w$ of length greater than or equal to $3$, such that $w$ has a subwalk of the form $w_1^{+1}w_2^{-1}$. Since $\Lambda$ is an algebra of finite type, the walk $w_1^{+1}w_2^{-1}$ has one of the following forms:
 \begin{itemize}

 \item First case: The walk $w_1^{+1}w_2^{-1}$ is of the form
\begin{center}
$$\hskip .5cm \xymatrix{
&{1}\ar @{<-}[dl]_{w_1} \ar @{<-} [dr]^{w_2}\\
{2} &&{3}
}\hskip .5cm$$
\end{center}
 In this case $w$ has a subwalk of one of the following forms:
\begin{itemize}
\item[$(i)$]
$$\hskip .5cm \xymatrix{
&{1}\ar @{<-}[dl]_{w_1} \ar @{<-} [dr]^{w_2}&&{a}\ar @{<-}[dl]_{w_3}\\
{2} &&{3}
}\hskip .5cm$$

In this case the vertex $a$ can be either $2$ or $3$ or $4$.
\item[$(ii)$]
$$\hskip .5cm \xymatrix{
&{1}\ar @{<-}[dl]_{w_1} \ar @{<-} [dr]^{w_2}&&{a}\ar [dl]_{w_3}\\
{2} &&{3}
}\hskip .5cm$$

In this case the vertex $a$ can be either $1$ or $2$  or $3$ or $4$.
 \end{itemize}
  \item Second case: The walk $w_1^{+1}w_2^{-1}$ is of the form
$$\hskip .5cm \xymatrix{
&{1}\ar @{<-}[dl]_{w_1} \ar @{<-} [dr]^{w_2}\\
{1} &&{2}
}\hskip .5cm$$
In this case $w$ has a subwalk of one of the following forms:
\begin{itemize}
\item[$(i)$]
$$\hskip .5cm \xymatrix{
&{1}\ar @{<-}[dl]_{w_1} \ar @{<-} [dr]^{w_2}&&{a}\ar @{<-}[dl]_{w_3}\\
{1} &&{2}
}\hskip .5cm$$
In this case the vertex $a$ can be either $2$ or $3$.
 \item[$(ii)$]
$$\hskip .5cm \xymatrix{
&{1}\ar @{<-}[dl]_{w_1} \ar @{<-} [dr]^{w_2}&&{a}\ar [dl]_{w_3}\\
{1} &&{2}
}\hskip .5cm$$
In this case the vertex $a$ can be either $1$ or  $2$  or $3$.
\item[$(iii)$]
$$\hskip .5cm \xymatrix{
{1}\ar[dr]^{w_3}&&{1}\ar @{<-}[dl]_{w_1} \ar @{<-} [dr]^{w_2}\\
&{1} &&{2}
}\hskip .5cm$$
\item[$(iv)$]
$$\hskip .5cm \xymatrix{
{a}\ar @{<-}[dr]^{w_3}&&{1}\ar @{<-}[dl]_{w_1} \ar @{<-} [dr]^{w_2}\\
&{1} &&{2}
}\hskip .5cm$$
In this case the vertex $a$ can be either $2$ or $3$.

In all the above cases, there is a non-local indecomposable right $\Lambda$-module of length $4$ that by \cite[proposition 2.8]{NS} is $4$-factor serial, which gives a contradiction.
\end{itemize}
 \end{itemize}

Now assume that the condition $(ii)$ does not hold. Then there exists a walk $w$ of length greater than or equal to $4$, such that $w$ has a subwalk of the form $w_1^{-1}w_2^{+1}$. Since $\Lambda$ is an algebra of finite type, the walk $w_1^{-1}w_2^{+1}$ has one of the following forms:
 \begin{itemize}
 \item First case: The walk $w_1^{-1}w_2^{+1}$ is of the form
$$\hskip .5cm \xymatrix{
{2}\ar @{<-}[dr]^{w_1}&&{3}\ar @{<-}[dl]_{w_2}\\
&{1}
}\hskip .5cm$$
 In this case $w$ has a subwalk of one of the following forms:
 \begin{itemize}
 \item[$(i)$]
$$\hskip .5cm \xymatrix{
&{2}\ar @{<-}[dr]^{w_1}\ar[dl]^{w_3}&&{3}\ar @{<-}[dl]_{w_2}\ar[dr]_{w_4}&\\
{a}&&{1}&&{b}
}\hskip .5cm$$
In this case the vertices $a$ and $b$ can be either  $a=4$ and $b=5$ or $a=4$ and $b=1$.
\item[$(ii)$]
$$\hskip .5cm \xymatrix{
{2}\ar @{<-}[dr]^{w_1}&&{3}\ar @{<-}[dl]_{w_2}\ar[dr]^{w_3}&&{b}\\
&{1}&&{a}\ar[ur]^{w_4}&
}\hskip .5cm$$
In this case the vertices $a$ and $b$ can be either $a=4$ and $b=5$ or $a=4$ and $b=1$.
 \end{itemize}
 \item Second case: The walk $w_1^{-1}w_2^{+1}$ is of the form
 \begin{center}
$$\hskip .5cm \xymatrix{
{1}\ar @{<-}[dr]^{w_1}&&{2}\ar @{<-}[dl]_{w_2}\\
&{2}
}\hskip .5cm$$
\end{center}
 In this case $w$ has a subwalk of one of the following forms:
\begin{itemize}
\item[$(i)$]
$$\hskip .5cm \xymatrix{
&{1}\ar @{<-}[dr]^{w_1}\ar[dl]^{w_3}&&{2}\ar @{<-}[dl]_{w_2}\ar[dr]_{w_4}&\\
{3}&&{2}&&{2}
}\hskip .5cm$$

\item[$(ii)$]
$$\hskip .5cm \xymatrix{
{1}\ar @{<-}[dr]^{w_1}&&{2}\ar @{<-}[dl]_{w_2}\ar[dr]^{w_3}&&{2}\\
&{2}&&{2}\ar[ur]^{w_4}&
}\hskip .5cm$$

\item[$(iii)$]
$$\hskip .5cm \xymatrix{
{2}\ar @{<-}[dr]^{w_2}&&{1}\ar @{<-}[dl]_{w_1}\ar[dr]^{w_3}&&{4}\\
&{2}&&{3}\ar[ur]^{w_4}&
}\hskip .5cm$$

In all the above cases, there is a $4$-factor serial indecomposable right $\Lambda$-module of length $5$, which gives a contradiction.
\end{itemize}
 \end{itemize}

Assume that the condition $(iii)$ does not hold. Then there exist two paths  $p=p_1...p_l$ and $q=q_1...q_r$ such that $p_i, q_j\in Q_1$, $s(p_1)=s(q_1)$, $t(p_l)=t(q_r)$, $p-q\in I$ and $l\geq 3$.
 $$\hskip .5cm \xymatrix{
&{}\ar @{<-}[dl]_{p_{1}}\ar [r]^{p_{2}}&{}\ar[r]^{p_{3}}&\cdots\cdots&\ar[r]^{p_{l-1}}&\ar[dr]^{p_{l}}\\
{}\ar[dr]_{q_1}&&&&&&{}\ar @{<-}[dl]^{q_r}\\
&{}\ar [r]_{q_{2}}&{}\ar[r]_{q_3}&\cdots\cdots&\ar[r]_{q_{r-1}}&}\hskip .5cm$$
Then the string $w=p_{l-1}^{+1}p_{l}^{+1}q_{r}^{-1}\in \widetilde{\mathcal{S}}(\Lambda)$. $M(w)$ is a $4$-factor serial right $\Lambda$-module which gives a contradiction.
Now assume that the condition $(iv)$ does not hold. Then by \cite[Theorem 5.13] {NS}  $\Lambda$ is a right $t$-Nakayama algebra for some $t\leq 2$ which is a contradiction.
Conversely, assume that $(Q,I)$ satisfies the conditions $(i)$-$(v)$. By \cite{BR}, every indecomposable right $\Lambda$-module is either string or band or non-uniserial projective-injective. Since $\Lambda$ is representation finite, $\mathcal{B}(\Lambda)=\varnothing$. The conditions $(i)$, $(ii)$ and $(iii)$ imply that for any $w\in \widetilde{\mathcal{S}}(\Lambda)$, $w$ is either $w_1^{+1}...w_n^{+1}$ or $w_1^{-1}w_2^{+1}$ or $ w_1^{+1}w_2^{-1} $ or $ w_1^{-1}w_{2} ^{-1}w_3^{+1}$. If $w=w_1^{+1}...w_n^{+1}$, then $M(w)$ is uniserial. If $w=w_1^{-1}w_2^{+1}$, then $M(w)$ is $2$-factor serial. If $w=w_1^{+1}w_2^{-1}$ or $w=w_1^{-1}w_{2} ^{-1}w_3^{+1}$, then $M(w)$ is $3$-factor serial. By the condition $(iii)$, if there exists a non-uniserial projective-injective right $\Lambda$-module $M$, then $M$ is $3$-factor serial. The condition $(v)$ implies that, there exists at least one string module $M(w)$, where either $w=w_1^{-1}w_{2} ^{-1}w_3^{+1}$ or $w_1^{+1}w_2^{-1} $. Thus there exists a $3$-factor serial right $\Lambda$-module. Therefore $\Lambda$ is right $3$-Nakayama and the result follows.
\end{proof}

 \begin{remark}
  If the condition $(iii)$ of the Theorem \ref{T4} holds, then there exists a non-uniserial projective-injective $3$-factor serial right $\Lambda$-module.
 \end{remark}

\maketitle

\section{self-injective special biserial algebras of finite type}

In this section, we first characterize self-injective finite dimensional special biserial algebras of finite type. Then we give a characterization of right $3$-Nakayama self-injective algebras.

 \begin{theorem} \label{T10}
 Let $\Lambda=\frac{KQ}{I}$ be a basic and connected finite dimensional $K$-algebra.  Then $\Lambda$ is non-Nakayama self-injective special biserial algebra of finite type if and only if $\Lambda$ is given by the quiver $Q=Q_{m, n, s}$ with $s\geq 1$ and $m,n\geq 2$,
$$
\xymatrix{
&& {\bsm\bullet \esm}\ar[dl]_{\beta_{n}^{[s-1]}}&{\bsm \bullet \esm}\ar[l]_{\beta_{n-1}^{[s-1]}}\\
&{\bsm\bullet\esm}\ar[dl]_{\beta_{1}^{[0]}}\ar[d]_{\alpha_{1}^{[0]}}&{\bsm\bullet \esm}\ar[l]^{\alpha_{m}^{[s-1]}} &{\bsm\bullet\esm}\ar[l]^{\alpha_{m-1}^{[s-1]}}&{\bsm....\esm} \\
{\bsm\bullet\esm}\ar[d]_{\beta_{2}^{[0]}}&{\bsm\bullet\esm}\ar[d]_{\alpha_{2}^{[0]}}&&&&{\bsm.\\.\\.\esm}&{\bsm.\\.\\.\esm} \\
{\bsm.\\.\\.\esm}\ar[d]_{\beta_{n-1}^{[0]}}&{\bsm.\\.\\.\esm}\ar[d]_{\alpha_{m-1}^{[0]}}&&&&{\bsm\bullet\esm}&{\bsm\bullet\esm}\\
{\bsm\bullet\esm}\ar[dr]_{\beta_{n}^{[0]}}&{\bsm\bullet\esm}\ar[d]_{\alpha_{m}^{[0]}}&&&&{\bsm\bullet\esm}\ar[u]_{\alpha_{2}^{[2]}}&{\bsm\bullet\esm}\ar[u]_{\beta_{2}^{[2]}} \\
&{\bsm\bullet\esm}\ar[r]^{\alpha_{1}^{[1]}}\ar[dr]_{\beta_{1}^{[1]}}&{\bsm\bullet\esm}\ar[r]^{\alpha_{2}^{[1]}}&{\bsm....\esm}\ar[r]^{\alpha_{m-1}^{[1]}}&{\bsm\bullet\esm}\ar[r]^{\alpha_{m}^{[1]}}&{\bsm\bullet\esm}\ar[u]_{\alpha_{1}^{[2]}} \ar[ur]_{\beta_{1}^{[2]}}\\
&&{\bsm \bullet \esm}\ar[r]_{\beta_{2}^{[1]}}&{\bsm....\esm}\ar[r]_{\beta_{n-1}^{[1]}}&{\bsm\bullet\esm}\ar[ur]_{\beta_{n}^{[1]}} \\}
$$
bounded by the following relations $R_{m, n, s}$:
 \begin{itemize}
 \item[$(i)$] $\alpha_{1}^{[i]}\cdots\alpha_{m}^{[i]}=\beta_{1}^{[i]}\cdots\beta_{n}^{[i]}$ for all $i\in \lbrace 0,\cdots,s-1\rbrace$;
  \item[$(ii)$] $\beta_n^{[i]}\alpha_{1}^{[i+1]}=0$, $\alpha_m^{[i]}\beta_{1}^{[i+1]}=0$ for all $i\in \lbrace 0,\cdots,s-2\rbrace$,  $\beta_{n}^{[s-1]} \alpha_{1}^{[0]}=0$ and $\alpha_m^{[s-1]}\beta_{1}^{[0]}=0$;
 \item[$(iii)$]
 \begin{itemize}
\item[$(a)$] Paths of the form $\alpha_i^{[j]}...\alpha_h^{[f]}$ of  length $m+1$ are equal to $0$;
\item[$(b)$] Paths of the form $\beta_i^{[j]}...\beta_h^{[f]}$ of length $n+1$ are equal to $0$.
 \end{itemize}
 \end{itemize}
 \end{theorem}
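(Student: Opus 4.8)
The plan is to prove both implications by translating the structural conditions on indecomposable modules into combinatorial conditions on the quiver and its relations, using the theory of string algebras developed by Butler–Ringel. Since Theorem~\ref{T4} already characterizes finite dimensional right $3$-Nakayama algebras as special biserial algebras of finite type satisfying conditions $(i)$--$(iv)$, and since self-injectivity is extra data, I would first establish that any self-injective special biserial algebra of finite type has a very rigid shape: every vertex lies on exactly one ``$\alpha$-cycle'' and one ``$\beta$-cycle'' and the indecomposable projectives are the non-uniserial projective-injective modules whose top and socle are prescribed by the two arrows in and the two arrows out.

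First I would recall that for a special biserial algebra of finite type, every indecomposable projective is either uniserial or a non-uniserial projective-injective string-type module (this is the structure theorem of Wald–Waschb\"usch quoted in the excerpt). The key self-injectivity input is that every indecomposable projective must also be injective, hence must have simple socle; combined with the dual statement this forces that whenever a vertex has $|a^-|\le 1$ or $|a^+|\le 1$, the corresponding projective/injective is uniserial, and if $\Lambda$ is non-Nakayama there is at least one vertex with $|a^+|=|a^-|=2$. I would then argue by a propagation/connectivity argument: starting from such a biserial vertex, following the two arrows out and their unique non-zero continuations (and dually the arrows in), one generates two families of relations of the form $\alpha_1\cdots\alpha_m = \beta_1\cdots\beta_n$ and zero-relations $\beta_n\alpha_1' = 0$, $\alpha_m\beta_1' = 0$, and the finiteness of $\Lambda$ forces these paths to close up into the pictured ``cylinder'' $Q_{m,n,s}$ after $s$ steps. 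Here $m,n\ge 2$ record the lengths of the two uniserial ``arms'' of each indecomposable projective, and $s\ge 1$ counts how many biserial vertices occur; connectedness of $\Lambda$ then pins down the global shape up to the stated isomorphism type.

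For the converse, I would take $\Lambda = KQ_{m,n,s}/R_{m,n,s}$ and verify directly: (1) $(Q,I)$ is special biserial — each vertex has at most two arrows in, at most two out, and the commutativity relations in $(i)$ together with the zero-relations in $(ii)$, $(iii)$ guarantee the ``at most one non-zero continuation'' condition; (2) $\Lambda$ is of finite type — the zero-relations $(iii)(a)$, $(iii)(b)$ bound the length of any path, so there are no bands and only finitely many strings, hence $\Lambda$ is representation finite by Butler–Ringel; (3) $\Lambda$ is self-injective — one computes each indecomposable projective $P_a$ explicitly as a string/biserial module with two uniserial arms of lengths $m$ and $n$ meeting in a common socle, checks $\mathrm{soc}(P_a)$ is simple, and verifies that $P_a \cong I_b$ for the appropriate vertex $b$; and (4) $\Lambda$ is non-Nakayama, which is clear since it has a biserial vertex.

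The main obstacle will be step two of the forward direction: showing that the propagation of relations genuinely forces the global cylindrical structure, rather than merely a local picture. The delicate points are that the two arms must have \emph{constant} lengths $m$ and $n$ as one goes around (this uses self-injectivity at \emph{every} biserial vertex, comparing $P$ and $I$ Loewy lengths), that the zero-relations must be exactly the ``crossing'' ones $\beta_n^{[i]}\alpha_1^{[i+1]}=0$ and not something else (this uses the special biserial ``unique non-zero continuation'' axiom plus the commutativity relations), and that the gluing closes up consistently so that the parameter $s$ is well-defined — one must rule out, e.g., a M\"obius-type identification, which is where connectedness and the orientation of arrows enter. I would handle this by an induction on $s$, at each stage using the already-established local structure of an indecomposable projective-injective together with the fact (from Theorem~\ref{T4} or directly) that no indecomposable module of length $\ge 5$ can be non-uniserial, to control how the next biserial vertex attaches.
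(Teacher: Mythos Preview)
Your overall shape is reasonable, but there is a genuine gap in the forward direction. You propose to control how successive biserial vertices attach by invoking ``the fact (from Theorem~\ref{T4} or directly) that no indecomposable module of length $\ge 5$ can be non-uniserial.'' This claim is false in the generality of Theorem~\ref{T10}: the theorem classifies \emph{all} non-Nakayama self-injective special biserial algebras of finite type, not just right $3$-Nakayama ones, and for $Q_{m,n,s}$ the non-uniserial projective-injectives have length $m+n$, which is unbounded. Theorem~\ref{T4} is a statement about right $3$-Nakayama algebras and simply does not apply here; indeed Proposition~\ref{p8} shows these algebras are right $(m+n-1)$-Nakayama. So the tool you plan to use for the most delicate step---constancy of the arm lengths and consistent gluing---is unavailable.

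The paper handles this step by a different, purely local argument. First it shows $|a^+|=|a^-|$ for \emph{every} vertex (not merely that some biserial vertex exists): if $|a^-|=2$ and $|a^+|=1$, one of the two incoming compositions must vanish by special biseriality, and then the corresponding indecomposable injective fails to be projective. Next, for two adjacent biserial cells with arm lengths $(m,n)$ and $(r,l)$, it compares socles and tops: if $m>r$ one finds two distinct indecomposable projectives with the same simple socle, and if $m<r$ two distinct indecomposable injectives with the same simple top, both contradicting self-injectivity; hence $m=r$ and $n=l$. The commutativity relations and the zero relations of length $m+1$, $n+1$ are then forced in the same way. No induction on $s$ and no bound on lengths of non-uniserial modules is needed; everything follows from comparing individual projectives and injectives. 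Your outline for the converse direction (direct verification of special biserial, finite type, self-injective, non-Nakayama) is fine and matches what the paper asserts.
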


\begin{proof}
It is easy to see that $\Lambda=\frac{KQ}{I}$, where $Q=Q_{m, n, s}$, $I$ is an ideal generated by the relations $(i), (ii)$ and $(iii)$, $m,n\geq 2$ and $s\geq 1$ is a non-Nakayama self-injective special biserial algebra of finite type. Let $\Lambda=\frac{KQ}{I}$ be a non-Nakayama self-injective special biserial algebra of finite type, we show that $Q=Q_{m, n, s}$ with $s\geq 1$ and $m,n\geq 2$ bounded by relations $(i), (ii)$ and $(iii)$. \\
Since $\Lambda$ is self-injective, $Q$ has no sources and no sinks. Since $\Lambda$ is special biserial self-injective and non-Nakayama, then there exists $b\in Q_0$ such that $\mid b^+\mid=2$.  We show that for every vertex $a$ of $Q$, $|a^+|=|a^-|$. Assume on  contrary there exists a vertex $a$ such that $|a^-|=2$ and $|a^+|=1$. The following quiver is a subquiver of $Q$.
   $$\hskip .5cm \xymatrix{
&&&{b}\ar @{<-}[dl]_{\alpha}&\cdots\\
\cdots&{}\ar[r]^{\gamma}&{a}\ar[dr]_{\beta}\\
&&&{c}&\cdots&}\hskip .5cm$$
Since $\Lambda$ is special biserial then either $\gamma\alpha\in I$ or $\gamma\beta\in I$. If $\gamma\alpha\in I$, then the indecomposable injective right $\Lambda$-module $I(b)$ is not projective and if $\gamma\beta\in I$, then the indecomposable injective right $\Lambda$-module $I(c)$ is not projective which is a contradiction. The same argument shows that there is no vertex $a\in Q_0$ such that $|a^+|=2$ and $|a^-|=1$.
Consider a vertex $a$ of quiver $Q$ such that $|a^+|=|a^-|=2$. The following quiver is a subquiver of $Q$.
 $$\hskip .5cm \xymatrix{
\cdots&{b}\ar[dr]^{\alpha}&&{d}\ar @{<-}[dl]^{\gamma}&\cdots \\
&&{a}\ar @{<-}[dl]^{\beta}\ar[dr]_{\lambda}\\
\cdots&{c}&&{e}&\cdots}\hskip .5cm$$
We show that for the arrow $\gamma$, exactly one of the paths $\alpha\gamma$ or $\beta\gamma$ is in $I$ and for the arrow $\alpha$, exactly one of the paths $\alpha\gamma$ or $\alpha\delta$ is in $I$. If $\alpha\gamma\in I$ and $\beta\gamma\in I$, then the indecomposable injective right $\Lambda$-module $I(d)$ is not projective which gives a contradiction. If $\alpha\gamma\in I$, the same argument shows that $\alpha\lambda\not\in I$ and so $\beta\lambda\in I$ and $\beta\gamma\not\in I$. If $\beta\gamma\in I$, then $\alpha\gamma\not\in I$ and so $\alpha\lambda\in I$ and $\beta\lambda\not\in I$.
For any subquiver $Q'$ of $Q$ of the form

 $$\hskip .5cm \xymatrix{
&{}\ar @{<-}[dl]_{\alpha_1}\ar [r]^{\alpha_2}&\cdots&\ar[r]^{\alpha_{m-1}}&\ar[dr]^{\alpha_{m}}\\
{a}\ar[dr]_{\beta_1}&&&&&{b}\ar @{<-}[dl]^{\beta_n}\\
&{}\ar [r]_{\beta_2}&\cdots&\ar[r]_{\beta_{n-1}}&}\hskip .5cm$$
Since $\Lambda$ is of finite type, $n\geq 2$ or $m\geq 2$. Now we show that both $m$ and $n$ are grater than or equal to $2$. Assume that $Q$ has a subquiver of the form
 \begin{center}
$\begin{matrix}\xymatrix{{a} \ar@/_20pt/[rrrr]_>>>{\alpha} \ar[r]^{\beta_1}&{} \ar[r]^{\beta_2}&\cdots&{} \ar[r]^{\beta_{n}}&{b}\\
}
\end{matrix}\hskip.5cm$
\end{center}
 for some $n\geq 2$. If for some $i$, $\beta_1\cdots \beta_i\in I$, then the indecomposable injective right $\Lambda$-module $I(b)$ is not projective, which gives a contradiction and if $\beta_1\cdots \beta_n\not\in I$, then $\Lambda$ is representation infinite which gives a contradiction. Therefore $m\geq 2$.\\

For any subquiver $Q'$ of $Q$ of the form
 $$\hskip .5cm \xymatrix{
&{}\ar @{<-}[dl]_{\alpha_1}\ar [r]^{\alpha_2}&\cdots&\ar[r]^{\alpha_{m-1}}&\ar[dr]^{\alpha_{m}}\\
{}\ar[dr]_{\beta_1}&&&&&{a}\ar @{<-}[dl]^{\beta_n}\\
&{}\ar [r]_{\beta_2}&\cdots&\ar[r]_{\beta_{n-1}}&}\hskip .5cm$$

, with $m, n \geq 2$, we show that $\alpha_1...\alpha_m-\beta_1...\beta_n\in I$. Assume on the contrary that $\alpha_1...\alpha_m-\beta_1...\beta_n\notin I$. If either $\alpha_1...\alpha_i\in I$ for some $2\leq i\leq m$ or $(\beta_1...\beta_j\in I)$ for some $2\leq j\leq n$, then the indecomposable injective right $\Lambda$-module $I(a)$ is not projective which gives a contradiction. If there is no relation in this subquiver, then $\Lambda$ is not of finite representation type which gives a contradiction.\\
Assume that $Q$ has a subquiver of the form
 $$\hskip .5cm \xymatrix{
&{}\ar @{<-}[dl]_{\alpha_1}\ar [r]^{\alpha_2}&\cdots\ar[r]&\ar[r]^{\alpha_{m-1}}&\ar[dr]^{\alpha_{m}}&&{}\ar @{<-}[dl]_{\gamma_1}\ar[r]^{\gamma_2}&\cdots\ar[r]&{}\ar[r]^{\gamma_{s-1}}&{}\ar[dr]^{\gamma_r}\\
{}\ar[dr]_{\beta_1}&&&&&{}\ar @{<-}[dl]^{\beta_n}\ar[dr]_{\eta_1}&&&&&{}\ar @{<-}[dl]^{\eta_l}\\
&{}\ar [r]_{\beta_2}&\cdots\ar[r]&\ar[r]_{\beta_{n-1}}&&&{}\ar[r]_{\eta_2}&\cdots\ar[r]&{}\ar[r]_{\eta_{l-1}}&{}}\hskip .5cm$$
, with $m, n, r, l\geq 2$, bounded by relations $\alpha_1...\alpha_m - \beta_1...\beta_n\in I$, $\gamma_1...\gamma_r - \eta_1...\eta_l\in I $ and $\beta_n\gamma_1=\alpha_m\eta_1=0$. We show that in this case $r=m$ and $n=l$. Assume on a contrary that $m>r$. In this case there are two vertices $a$ and $b$ such that the indecomposable  projective right $\Lambda$-modules $P(a)$ and $P(b)$ have the same simple socle, which gives a contradiction.
 $$\hskip .5cm \xymatrix{
&{}\ar @{<-}[dl]_{\alpha_1}\ar [r]^{\alpha_2}&\cdots{a}\cdots\ar[r]&{b}\cdots\ar[r]^{\alpha_{m-1}}&\ar[dr]^{\alpha_{m}}&&{}\ar @{<-}[dl]_{\gamma_1}\ar[r]^{\gamma_2}&\cdots\ar[r]&{c}\ar[r]&\cdots\ar[r]&{}\ar[dr]^{\gamma_r}\\
{}\ar[dr]_{\beta_1}&&&&&{}\ar @{<-}[dl]^{\beta_n}\ar[dr]_{\eta_1}&&&&&&{}\ar @{<-}[dl]^{\eta_l}\\
&{}\ar [r]_{\beta_2}&\cdots\ar[r]&\ar[r]_{\beta_{n-1}}&&&{}\ar[r]_{\eta_2}&\cdots\ar[r]&\cdots\ar[r]&\cdots\ar[r]&{}}\hskip .5cm$$
If $m<r$, then there are two vertices $b$ and $c$ such that the indecomposable injective right $\Lambda$-modules $I(b)$ and $I(c)$ have the same simple top, which gives a contradiction.
$$\hskip .5cm \xymatrix{
&{}\ar @{<-}[dl]_{\alpha_1}\ar [r]^{\alpha_2}&\cdots\ar[r]{a}\ar[r]&\cdots\ar[r]^{\alpha_{m-1}}&\ar[dr]^{\alpha_{m}}&&{}\ar @{<-}[dl]_{\gamma_1}\ar[r]^{\gamma_2}&\cdots\ar[r]&{b}\cdots\ar[r]&{c}\cdots\ar[r]&{}\ar[dr]^{\gamma_r}\\
{}\ar[dr]_{\beta_1}&&&&&{}\ar @{<-}[dl]^{\beta_n}\ar[dr]_{\eta_1}&&&&&&{}\ar @{<-}[dl]^{\eta_l}\\
&{}\ar [r]_{\beta_2}&\cdots\ar[r]&\ar[r]_{\beta_{n-1}}&&&{}\ar[r]_{\eta_2}&\cdots\ar[r]&\cdots\ar[r]&\cdots\ar[r]&{}}\hskip .5cm$$
The similar argument shows that $n=l$. Finally we show that any paths of form $\alpha_i^{[j]}...\alpha_h^{[f]}$ of length $m+1$ is zero. First we note that if there exist a positive integer $t$ and a path $w$ of the form $w=\alpha_i^{[j]}...\alpha_h^{[f]}$ of length $t$ such that $w=0$, then any path of the form $\alpha_i^{[j]}...\alpha_h^{[f]}$ of length $t$ should be zero. Since otherwise we can find an indecomposable projective right $\Lambda$-module, which is not injective. Now since by the above arguments $\alpha_1^{[i]}......\alpha_{m}^{[i]}-\beta_1^{[i]}....\beta_n^{[i]}=0$ and $\beta_n^{[i]}\alpha_1^{[i+1]}=0$, $\alpha_1^{[i]}......\alpha_{m}^{[i]}\alpha_1^{[i+1]}=0$. Therefor any paths of form $\alpha_i^{[j]}...\alpha_h^{[f]}$ of length $m+1$ is zero. The similar argument shows that any paths of the form $\beta_i^{[j]}...\beta_h^{[f]}$ of length $n+1$ is zero.
\end{proof}

The following Proposition provide a large class of self-injective right $m+n-1$-Nakayama algebras.

 \begin{proposition}\label{p8}
 Let $\Lambda=\frac{KQ}{I}$ be a basic and connected finite dimensional $K$-algebra such  that $Q= Q_{m, n, s}$ and $I=R_{m, n, s}$ with $s\geq 1$ and $m, n\geq 2$. Then $\Lambda$ is a right $(m+n-1)$-Nakayama algebra.
 \end{proposition}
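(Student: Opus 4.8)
The plan is to analyze the algebra $\Lambda = KQ_{m,n,s}/R_{m,n,s}$ module-theoretically by first classifying all of its indecomposable modules, and then reading off the factor-serial type of each. Since the defining relations make $(Q_{m,n,s}, R_{m,n,s})$ a special biserial algebra (conditions (1) and (2) of special biseriality are immediate from the shape of $Q_{m,n,s}$: every vertex has at most two incoming and at most two outgoing arrows, and the monomial and commutativity relations ensure the "at most one continuation" property), by the Butler--Ringel theory quoted in Section~3 every indecomposable is either a string module, a band module, or a non-uniserial projective-injective module. The first step is therefore to verify that $\Lambda$ is representation finite — equivalently that $\mathcal{B}(\Lambda) = \varnothing$ — so that only string modules and projective-injectives occur. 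This should follow because any band would have to traverse one of the "hexagon" branch points infinitely often, but the monomial relations $\beta_n^{[i]}\alpha_1^{[i+1]} = 0$, $\alpha_m^{[i]}\beta_1^{[i+1]}=0$ (cyclically closed by $\beta_n^{[s-1]}\alpha_1^{[0]}=0$ and $\alpha_m^{[s-1]}\beta_1^{[0]}=0$), together with the length bounds in (iii), prevent the existence of a reduced cyclic string all of whose powers are strings.

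Next I would enumerate the strings explicitly. Because of the commutativity relations $\alpha_1^{[i]}\cdots\alpha_m^{[i]} = \beta_1^{[i]}\cdots\beta_n^{[i]}$ and the monomial relations, a reduced walk in $Q_{m,n,s}$ avoiding subwalks in $I$ can contain at most one "peak" of the form $w_1^{+1}w_2^{-1}$ and this peak must sit at one of the top branch vertices, while it can contain at most one "valley" $w_1^{-1}w_2^{+1}$ at a bottom branch vertex, and these interact in a controlled way. The key point to extract is that the longest string of the form $w_1^{-1}w_2^{-1}\cdots$ involving a single peak has its uniserial part of length exactly governed by $m$ on one side and $n$ on the other, so that the associated string module $M(w)$ has $M/\mathrm{rad}^{l-3}(M)$ uniserial and $M/\mathrm{rad}^{l-2}(M)$ non-uniserial — i.e. it is $3$-factor serial — and no string yields anything of higher factor-serial type. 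Here I would use Lemma~\ref{l1} as the computational criterion. Separately, the non-uniserial projective-injective modules: the projective at a top branch vertex $a$ with $|a^+|=2$ has radical whose composition structure is the union of an $\alpha$-path of length $m-1$ and a $\beta$-path of length $n-1$ glued along a common socle, so $P(a)$ has length $m+n-1$, Loewy length $\max(m,n)$, is local with simple socle, and by Lemma~\ref{l1}(b)(i)(2) is $(m+n-1)$-factor serial. That is precisely where the number $m+n-1$ enters.

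The conclusion would then be assembled as follows: every indecomposable $\Lambda$-module is $i$-factor serial for some $i$, the maximum value of $i$ over all indecomposables is $m+n-1$ (attained by the non-uniserial projective-injectives, and not exceeded by any string module since strings only give uniserial, $2$-factor serial, or $3$-factor serial modules under the constraints above, and $m+n-1 \ge 3$), and at least one indecomposable $(m+n-1)$-factor serial module exists (the projective-injective just described). By the Definition of right $n$-Nakayama this says exactly that $\Lambda$ is right $(m+n-1)$-Nakayama. I expect the main obstacle to be the bookkeeping in the string enumeration — proving rigorously that no string module is more than $3$-factor serial and that the projective-injectives are exactly $(m+n-1)$-factor serial, which requires carefully tracking radical layers through the branch vertices and invoking Lemma~\ref{l1} in each configuration; the finite-type claim and the identification of $P(a)$'s Loewy structure are comparatively routine given the explicit relations.
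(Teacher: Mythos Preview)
Your strategy of enumerating indecomposables via Butler--Ringel and reading off the factor-serial type is reasonable, but there is a genuine error in the bound you claim for string modules. You assert that ``strings only give uniserial, $2$-factor serial, or $3$-factor serial modules,'' and this is simply false once $m+n-1>3$. Take the peak string
\[
w=\alpha_2^{[i]}\cdots\alpha_m^{[i]}(\beta_n^{[i]})^{-1}\cdots(\beta_2^{[i]})^{-1},
\]
whose string module is precisely $\mathit{rad}\,P(v)$ for a branch vertex $v$. This module has length $m+n-1$ and a two-dimensional top (the simples at $t(\alpha_1^{[i]})$ and $t(\beta_1^{[i]})$), so it is not local, and by \cite[Corollary~2.8]{NS} it is $(m+n-1)$-factor serial, not $3$-factor serial. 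More generally, any peak string of length $\ell$ produces a non-local module of length $\ell+1$, hence $(\ell+1)$-factor serial; these range all the way up to $m+n-1$. (There is also a minor off-by-one slip: $P(a)$ has length $m+n$, not $m+n-1$; it is its radical that has length $m+n-1$.)

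The paper's proof avoids the string enumeration entirely. It argues that the non-uniserial projective-injective $M=P(v)$ has length $m+n$, that this is the maximal length among all indecomposables, and that $\mathit{rad}(M)$ is not local; since $M$ itself is local this makes $M$ an $(m+n-1)$-factor serial module. The upper bound for all other indecomposables then comes for free from the general inequality (factor-serial index $\le$ length) together with $l(N)\le m+n$ and the observation that the only modules of maximal length are the local projective-injectives. Your approach can be repaired by proving the correct bound $m+n-1$ for string modules rather than $3$, but the paper's length-based argument is considerably shorter.
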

 \begin{proof}
There exists a projective-injective non-uniserial right $\Lambda$-module $M$ of length $m+n$, such that for every indecomposable right $\Lambda$-module $N$, $l(M)\geq l(N)$ and $rad(M)$ is not local. Then by \cite[Corollary 2.8]{NS}, $M$ is $(m+n-1)$-factor serial. Therefor $\Lambda$ is right $(m+n-1)$-Nakayama.
 \end{proof}

 \begin{corollary}
 Let $\Lambda=\frac{KQ}{I}$ be a basic, connected and finite dimensional $K$-algebra. Then $\Lambda$ is right $3$-Nakayama self-injective if and only if $Q= Q_{2, 2, s}$ and $I=R_{2, 2, s}$.

 \end{corollary}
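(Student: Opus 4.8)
The plan is to combine the structural classification of self-injective special biserial algebras of finite type (Theorem \ref{T10}) with the computation of the Nakayama index of the algebras $KQ_{m,n,s}/R_{m,n,s}$ (Proposition \ref{p8}), together with the elementary observation that the integer $n$ for which an artin algebra is right $n$-Nakayama is uniquely determined.

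For the forward implication, assume $\Lambda=KQ/I$ is basic, connected, finite dimensional, right $3$-Nakayama and self-injective. Since $\Lambda$ is right $3$-Nakayama it admits an indecomposable $3$-factor serial module, which is in particular non-uniserial, so $\Lambda$ is not Nakayama; and by Proposition \ref{p1} it is special biserial of finite type. Hence Theorem \ref{T10} applies and gives $Q\cong Q_{m,n,s}$ and $I=R_{m,n,s}$ for some $s\geq 1$ and $m,n\geq 2$. Now Proposition \ref{p8} tells us that this algebra is right $(m+n-1)$-Nakayama. On the other hand, the defining condition --- every indecomposable is $i$-factor serial for some $1\leq i\leq n$ and some indecomposable is exactly $n$-factor serial --- determines $n$ as the supremum of the factor-serial types of the indecomposable modules, hence uniquely. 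Comparing, $m+n-1=3$, and since $m,n\geq 2$ this forces $m=n=2$. Therefore $Q\cong Q_{2,2,s}$ and $I=R_{2,2,s}$.

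For the reverse implication, assume $Q=Q_{2,2,s}$ and $I=R_{2,2,s}$ with $s\geq 1$. The ``only if'' side of Theorem \ref{T10} (the direction checked at the start of its proof) shows that $\Lambda=KQ/I$ is a non-Nakayama self-injective special biserial algebra of finite type; in particular it is self-injective. Applying Proposition \ref{p8} with $m=n=2$ yields that $\Lambda$ is right $(2+2-1)=3$-Nakayama, which completes both directions.

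I do not expect any serious obstacle here: all the heavy lifting --- the classification in Theorem \ref{T10}, the finite-type reduction in Proposition \ref{p1}, and the index computation in Proposition \ref{p8} --- is already available, and the only genuinely new ingredient is the uniqueness of the Nakayama index, which is immediate from the definition. The one point to handle with a little care is to confirm that the algebras $KQ_{m,n,s}/R_{m,n,s}$ really do satisfy the hypotheses of Theorem \ref{T10} ($m,n\geq 2$, $s\geq 1$), so that the correspondence between parameters and algebras may be used to read off $I=R_{2,2,s}$ once $m=n=2$ is known.
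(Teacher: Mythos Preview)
Your proposal is correct and follows essentially the same approach as the paper: the paper's proof simply reads ``It follows from Proposition \ref{p8} and Theorem \ref{T10}'', and your argument is a fully detailed version of exactly that, additionally making explicit the use of Proposition \ref{p1} to verify the special-biserial hypothesis of Theorem \ref{T10} and the uniqueness of the Nakayama index.
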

 \begin{proof}
 It follows from Proposition \ref{p8} and Theorem \ref{T10}
 \end{proof}

\section*{acknowledgements} The research of the first
author was in part supported by a grant from IPM (No. 96170419).


\end{document}